\title[Mixed Hodge structure associated to hypersurface singularities]{Mixed Hodge structure associated to hypersurface singularities}
\author{Mohammad Reza Rahmati}
\thanks{}
\address{Centro de Investigacion en Matematicas , A.C.
\hfill\break 
\hfill\break \\
\hfill\break }
\email{mrahmati@cimat.mx}
\newcommand{\comments}[1]{}
\newtheorem{theorem}{Theorem}[section]
\newtheorem{proposition}[theorem]{Proposition}
\newtheorem{corollary}[theorem]{Corollary}
\newtheorem{remark}[theorem]{Remark}
\newtheorem{example}[theorem]{Example}
\keywords{Variation of mixed Hodge structure, Primitive elements, Brieskorn Lattice}
\subjclass{14D07}
\begin{document}

\begin{abstract}
Let $f:\mathbb{C}^{n+1} \to \mathbb{C}$ be a germ of hypersurface with isolated singularity. One can associate to $f$ a polarized variation of mixed Hodge structure $\mathcal{H}$ over the punctured disc, where the Hodge filtration is the limit Hodge filtration of W. Schmid and J. Steenbrink. By the work of M. Saito and P. Deligne the VMHS associated to cohomologies of the fibers of $f$ can be extended over the degenerate point $0$ of disc. The new fiber obtained in this way is isomorphic to the module of relative differentials of $f$ denoted $\Omega_f$. A mixed Hodge structure can be defined on $\Omega_f$ in this way. The polarization on $\mathcal{H}$ deforms to Grothendieck residue pairing modified by a varying sign on the Hodge graded pieces in this process. This also proves the existence of a Riemann-Hodge bilinear relation for Grothendieck pairing and allow to calculate the Hodge  signature of Grothendieck pairing.  
\end{abstract}

\maketitle


\section*{Introduction}

\vspace{0.5cm}

One of the important subjects of study in Hodge theory and D-modules is the asymptotic behaviour of the underlying variation of (mixed) Hodge structures and also the possibility to extend such structures over the degeneracy. If the VMHS is coming from an isolated hypersurface singularity, the mixed Hodge structure we are interested in is the Steenbrink limit mixed Hodge structure (W. Schmid LMHS in projective fibrations). Classically there are two equivalent ways to define Steenbrink LMHS. The first, due to J. Steenbrink is by a spectral sequence argument applied to the resolution of singularities in a projective fibration. In the affine case one needs to embed the original family in a projective one and then use the Invariant cycle theorem. Another method is by the structure of lattices in the Gauss-Manin system associated to VMHS on the punctured disc. It concerns the definition of Brieskorn lattices. By applying the powers of the Gauss-Manin connection to the Brieskorn lattice one can define a decreasing filtration, that is equivalent to the limit Hodge filtration of J. Steenbrink. Our strategy is to extend such local system of MHS over the degeneracy. We explain this by gluing vector bundles. It mainly involves the fact that, $V$-lattices can be glued with the Brieskorn lattices in different charts. An observation made by P. Deligne and M. Saito is that, one can define a new filtration on the vanishing cohomology, that is opposite to the original Hodge filtration in this way. 

Throughout the whole text assume $f:\mathbb{C}^{n+1} \to \mathbb{C}$ is a holomorphic germ with isolated singularity at $0 \in \mathbb{C}^{n+1}$ and $f:X \to T$ a representative for the Milnor fibration. One can associate a local system to this data by

\vspace{0.2cm}

\begin{center}
$\mathcal{H}=R^nf_*\mathbb{C}= \bigcup_{t \in T'}H^n(X_t,\mathbb{C})$
\end{center}

\vspace{0.2cm}

Denote by $M$ the monodromy, and $N=\log(M_u) \otimes 1/2 \pi i$ the logarithm of the unipotent part of $M$. By the monodromy theorem $M$ is quasi-unipotent. We will use the notation $X_{\infty}=H \times_{T'} X_t$, where $H$ is the upper half plane for the canonical fiber, and

\vspace{0.2cm}
 
\begin{center}
$\Omega_f \cong \displaystyle{\frac{\Omega_{X,0}^{n+1}}{df \wedge \Omega_{X,0}^{n}}}$ 
\end{center}

\vspace{0.2cm}

Let $\mathcal{G}$ be the associated Gauss-Manin system via the Riemann-Hilbert correspondence. We shall extend the Gauss-Manin system $\mathcal{G}$ as well as $\mathcal{H}$ over the puncture. The stack on $0$ of the extended Gauss-Manin module can be naturally identified with the module of relative differentials $\Omega_f$. This is a direct consequence of isomorphisms between $V$-lattices and the Brieskorn lattices, which also pairs the Hodge filtration and the (actually a re-indexed) $V$-filtration in an opposite way. A classical example of this situation is when the variation of MHS is Hodge-Tate. We show a modification of Grothendieck residue defines a polarization for a mixed Hodge structure on $\Omega_f$.

\begin{theorem}
Assume $f:\mathbb{C}^{n+1} \to \mathbb{C}$ is a holomorphic hypersurface germ with isolated singularity at $0 \in \mathbb{C}^{n+1}$. Then the variation of mixed Hodge structure, cf. \cite{H1}. This VMHS can be extended to the puncture with the extended fiber isomorphic to $\Omega_f$ in the sense of sec. 4 and 5, and it is polarized by 5.1. The Hodge filtration on the new fiber $\Omega_f$ corresponds to an opposite Hodge filtration on $H^n(X_{\infty},\mathbb{C})$ in the way explained in Sec. 4.
\end{theorem}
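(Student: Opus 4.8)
The plan is to assemble the statement from three constructions carried out in Sections 4 and 5 and then to match them at the level of pairings. First I would record the polarized VMHS on $\mathcal{H}=R^nf_*\mathbb{C}$ over the punctured disc $T'$. The weight filtration $W_\bullet$ is the monodromy weight filtration attached to $N=\log(M_u)\otimes 1/2\pi i$, suitably centred, and the limiting Hodge filtration $F^\bullet_\infty$ on $H^n(X_\infty,\mathbb{C})$ is produced by Schmid's nilpotent orbit theorem; equivalently one embeds the Milnor fibration in a projective one, runs Steenbrink's spectral sequence, and invokes the local invariant cycle theorem. The polarization $S$ is the intersection/cup-product form on the fibers $H^n(X_t,\mathbb{C})$, which satisfies $S(Nx,y)+S(x,Ny)=0$ together with the two Riemann--Hodge relations on the $N$-primitive pieces $P_\ell=\Ker(N^{\ell+1})\cap\Gr^W$.

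Second, I would pass to the $D$-module side. Through the Riemann--Hilbert correspondence $\mathcal{H}$ gives the Gauss--Manin system $\mathcal{G}$, inside which sit the Brieskorn lattice $\mathcal{H}''_0=\Omega^{n+1}_{X,0}/df\wedge d\Omega^{n-1}_{X,0}$ and the Kashiwara--Malgrange $V$-filtration. The two inputs I need are: (i) the classical isomorphism $\mathcal{H}''_0/\partial_t^{-1}\mathcal{H}''_0\cong\Omega_f$ identifying the fibre of the lattice with the module of relative differentials; and (ii) the comparison $\Gr_V^\alpha\mathcal{G}\cong\bigoplus_\lambda H^n(X_\infty)_\lambda$, expressing $F^\bullet_\infty$ through the powers of $\partial_t$ applied to $\mathcal{H}''_0$. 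I would then extend the bundle $\mathcal{H}$ (resp.\ $\mathcal{G}$) across the puncture by gluing the $V$-lattice in the chart around $0$ to the Brieskorn lattice, so that the new fibre over $0$ becomes $\mathcal{H}''_0/\partial_t^{-1}\mathcal{H}''_0\cong\Omega_f$; this is precisely what is meant ``in the sense of Sec.\ 4 and 5''.

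Third, and this is the crux, I would show that the filtration $\mathcal{H}''_0$ induces on $\Omega_f$ is opposite, in Deligne's sense, to $F^\bullet_\infty$ on $H^n(X_\infty)$. Concretely, the gluing pairs $F^\bullet$ with a re-indexed $V$-filtration, transported to a filtration $U^\bullet$ on $H^n(X_\infty)$, and opposite-ness amounts to the splitting $H^n(X_\infty)=\bigoplus_{p}\bigl(F^p_\infty\cap U^{\,n-p}\bigr)$ compatible with $W_\bullet$. This is the Deligne--Saito observation recalled in the introduction; it both produces the mixed Hodge structure on $\Omega_f$ and fixes its indexing. The main obstacle will be the final compatibility of polarizations: I must verify that under the lattice isomorphisms the intersection form $S$ deforms into the Grothendieck residue pairing on $\Omega_f$ up to a sign varying with the Hodge graded piece. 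I expect to establish this graded piece by graded piece on each $\Gr_V^\alpha$ --- using that $S$ is computed by the period pairing while the residue pairing is the self-duality of $\mathcal{H}''_0/\partial_t^{-1}\mathcal{H}''_0$ --- and to track the sign through the $(-1)$-factors appearing in the Riemann--Hodge relations for the $N$-primitive parts $P_\ell$. Once this sign bookkeeping is in place, 5.1 supplies the polarization, the Riemann--Hodge bilinear relations for the Grothendieck pairing follow by transporting those of $S$ through the opposite filtration, and the Hodge signature is read off the primitive decomposition.
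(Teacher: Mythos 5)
Your skeleton through the extension step is faithful to the paper: Schmid/Steenbrink limit data on $H^n(X_\infty,\mathbb{C})$, the Gauss--Manin system with the Brieskorn lattice $H''$ and the $V$-filtration, the gluing of the $V$-lattice chart to the Brieskorn-lattice chart so that the new fiber is $H''/\partial_t^{-1}H''\cong\Omega_f$, the oppositeness of $F^\bullet_\infty$ to the transported (re-indexed) $V$-filtration $U^\bullet$ (the paper's Prop.~4.2, after M.~Saito), and the transport of the MHS via the isomorphism $\Phi$ built from $\psi_\alpha$ and $\partial_t^{p-n}$. Up to that point you are reconstructing Sections 3 and 4 correctly.

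The genuine gap is in your third step, the polarization correspondence, and it is twofold. First, circularity: you propose to ``track the sign through the $(-1)$-factors appearing in the Riemann--Hodge relations,'' but in the paper the Riemann--Hodge bilinear relations for the modified Grothendieck pairing are a \emph{corollary} (5.6) of Theorem 5.1, deduced by transporting those of $(H^n(X_\infty),S)$; they cannot serve as input to fix the twist. Moreover the twist $\tilde{C}|_{J^{p,q}}=(-1)^p$ is indexed by the Deligne--Hodge bidegree $p$, not by the primitive level $\ell$ of the $N$-Lefschetz decomposition, so bookkeeping on the primitive pieces $P_\ell$ cannot produce it. Second, missing mechanism: observing that $S$ is the period pairing while $\mathrm{res}_{f,0}$ is the self-duality of $H''/\partial_t^{-1}H''$ does not yet relate the two; some nontrivial computation is indispensable. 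The paper's proof of 5.1 supplies it analytically: Morsify via $f_s=f+\sum_i s_ix_i$, expand the oscillatory integrals $\int_{\Gamma_r^{\pm}}e^{\mp f_s/z}\phi_i$ over Lefschetz thimbles by stationary phase to obtain $S_{ij}(0,z)=(-1)^{n(n+1)/2}(2\pi i z)^{n+1}\mathrm{Res}_f(\phi_i,\phi_j)$, extract the grading-dependent sign from the involution $z\mapsto e^{\pi i/d}z$ (which exchanges $e^{-f}\phi_j$ with $c_p\,e^{+f}\phi_j$, the constant depending only on the Hodge degree of the form) in the homogeneous case, insert $N_Y$ on one argument when the Poincar\'e product degenerates, and pass to arbitrary germs by Scherk's continuity of the Grothendieck residue under embedding into $f_Y$, letting $d\to\infty$. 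An alternative closing of your Step 3 would be to invoke K.~Saito's higher residue pairing together with Hertling's good basis (the paper's Prop.~4.1, property (3), $P_S(s_i,s_j)=\pm\delta_{(\mu+1-i)j}\partial_t^{-1-n}$), but your proposal names neither this nor the oscillatory-integral route, so as written the crux step does not close.
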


The way to reach to this result is by a standard gluing argument on the structure theory of Gauss-Manin system explained in \cite{SCHU}. We equip $\Omega_f$ with a mixed Hodge structure compatible with this gluing. The definition is based on a linear isomorphism $\Phi$ between the two vector spaces $(H^n(X_{\infty},\mathbb{C}),M)$ and $\Omega_f$, which already have been used by A. Varchenko in its graded form. However we take a lift of the map employed by Varchenko. The main theorem 5.1 express that this isomorphism pairs the polarization on the vanishing cohomology and a modification of the Grothendieck residue of $f$. The computation of a twist in Grothendieck residue which goes back to the inter-relation with higher residue pairing with the linking form of $f$ makes the contribution non-trivial. It also allows to formulate a Riemann-Hodge bilinear relation for Grothendieck residue. 
We present several applications of the above procedure.

\vspace{0.3cm}

\section{Steenbrink limit Mixed Hodge structure}

\vspace{0.5cm}

Suppose we have an isolated singularity holomorphic germ $f:\mathbb{C}^{n+1} \to \mathbb{C}$. By the Milnor fibration Theorem we can always associate to $f$ a $C^{\infty}$-fiber bundle over a small punctured disc $T^{\prime}$. The corresponding cohomology bundle $\mathcal{H}$, constructed from the middle cohomologies of the fibers defines a variation of mixed Hodge structure on $T^{\prime}$. Recall the Brieskorn lattice is defined by,

\vspace{0.2cm}

\begin{center}
$(\mathcal{H}^{(0)}=)H^{\prime \prime}=f_*\displaystyle{\frac{\Omega_{X,0}^{n+1}}{df \wedge d\Omega_{X,0}^{n-1}}}$
\end{center}

\vspace{0.2cm}

The Brieskorn lattice is the stack at $0$ of a locally free $\mathcal{O}_T$-module $\mathcal{H}^{\prime \prime}$ of rank $\mu$ with  $\mathcal{H}^{\prime \prime}_{T'} \cong \mathcal{H}$, and hence $H^{\prime \prime} \subset (i_*\mathcal{H})_0$, with $i:T^{\prime} \hookrightarrow T$. The regularity of the Gauss-Manin connection proved by Brieskorn 
and Malgrange implies that $H^{\prime \prime} \subset \mathcal{G}$. It is a theorem by Malgrange (see \cite{SCHU} 1.4.10) $H^{\prime \prime} \subset V^{-1}$. The Leray residue formula can be used to express the action of $\partial_t$ in terms of differential forms by $\partial_t^{-1}s[d\omega]= s[df \wedge \omega]$, where, $\omega \in \Omega_X^n$ and $s(\eta)(t)=Res_{f=t}(\eta/(f-t))$. In particular, $\partial_t^{-1}.H^{\prime \prime} \subset H^{\prime \prime}$, and
 
\begin{equation}
\displaystyle{\frac{H^{\prime \prime}}{\partial_t^{-1}.H^{\prime \prime}}} \cong \displaystyle{\frac{\Omega_{X,0}^{n+1}}{df \wedge \Omega_{X,0}^{n}}} \cong  \displaystyle{\frac{\mathbb{C}\{\underline{z}\}}{( \partial(f))}}.
\end{equation}

\vspace{0.3cm}

\noindent
for the module of relative differentials of the map $f$. 
The Hodge filtration on $H^n(X_{\infty},\mathbb{C})$ is defined by

\begin{equation}
F^pH^n(X_{\infty})_{\lambda}=\psi_{\alpha}^{-1}\partial_t^{n-p} Gr_V^{\alpha +n-p}\mathcal{H}^{(0)}.
\end{equation}

\vspace{0.2cm}

\noindent
where $\psi_{\alpha}:H^n(X_{\infty})_{\lambda} \to C^{\alpha} \subset \mathcal{G}$ is 

\vspace{0.2cm}

\begin{center}
$\psi_{\alpha}(A_{\alpha})=t^{\alpha}\exp(\log(t)N/2\pi i)A_{\alpha}, \qquad \lambda=\exp(2\pi i \alpha ), \ -1 \leq \alpha <0$
\end{center}

\vspace{0.2cm}

\noindent
Therefore, $Gr_F^{p}H^n(X_{\infty},\mathbb{C})_{\lambda}=Gr_V^{\alpha+n-p}\Omega_f$. The $V$-filtration on $\Omega_f$ is defined by

\vspace{0.2cm}

\begin{equation}
V^{\alpha} \Omega_f= pr(V^{\alpha} \cap H'')
\end{equation}

\vspace{0.2cm}

\noindent
Clearly $V^{\alpha}\Omega_f=\oplus_{\beta \geq \alpha} \Omega_f^{\beta}$ and $\Omega_f \cong \oplus Gr_V^{\alpha}\Omega_f$ hold. 

A theorem of A. Varchenko, relates the operator $N$, on vanishing cohomology and multiplication by $f$ on $\Omega_f$. More specifically, the maps $Gr(f)$ and $N=\log{M_u} \in End (H^n(X_{\infty},\mathbb{C}))$ have the same Jordan normal forms. We have a canonical isomorphism 

\vspace{0.3cm}

\begin{center}
$Gr_FH^n(X_{\infty}, \mathbb{C})= \displaystyle{\bigoplus_{-1<\alpha \leq 0}}Gr_FC^{\alpha}$
\end{center}

\vspace{0.3cm}

\noindent
and the corresponding endomorphism 

\vspace{0.2cm}

\begin{center}
$N_{p,\alpha}:Gr_F^pC^{\alpha} \to Gr_F^{p-1}C^{\alpha}$ \\[0.2cm] $\qquad \qquad \qquad N_{p,\alpha}(x)=-2 \pi i (t\partial_t-\alpha)x \cong -2 \pi i .t \partial_t x \qquad (\mod F^p)$.
\end{center}

\vspace{0.2cm}

\noindent
If $\beta \in \mathbb{Q},\ \beta=n-p+\alpha$ with $p \in \mathbb{Z}$ and $-1 < \alpha \leq 0$, the map

\vspace{0.2cm}
 
\begin{center}
$\partial_t^{n-p}:V^{\beta} \cap F^n\mathcal{H}_{X,0} \to V^{\alpha}/V^{>\alpha}=C^{\alpha}$
\end{center}

\vspace{0.2cm}

\noindent
induces an isomorphism from $Gr_{\beta}^V \Omega_f \to Gr_F^pC^{\alpha}$, and the diagram 

\vspace{0.3cm}

\begin{center}
$\begin{CD}
Gr_{\beta}^V \Omega_f @>Gr(f)>> Gr_{\beta+1}^V \Omega_f \\
@V\partial_t^{n-p}VV    @V\partial_t^{n-p+1}VV\\
Gr_F^pC^{\alpha} @>N_{p,a}>> Gr_F^{p-1}C^{\alpha}
\end{CD}$
\end{center}

\vspace{0.3cm}

\noindent
commutes up to a factor of $-2 \pi i$. Hence $Gr(f)$ and $Gr_FN$ have the same Jordan normal form.

\vspace{0.5cm}

\section{Integrals along Lefschetz thimbles}

\vspace{0.5cm}

The materials in this section are all well known, however we added this for the convenience in the notations used for the main theorem in section 5. Consider the function $f:\mathbb{C}^{n+1} \to \mathbb{C}$ with isolated singularity at $0$, and a holomorphic differential $(n+1)$-form $ \omega $ given in a neighborhood of the critical point. We shall study the asymptotic behaviour of the integral,
 
\begin{equation}
\displaystyle{\int_{\Gamma} e^{\tau f}\omega}
\end{equation}

\vspace{0.3cm}

\noindent
for large values of the parameter $\tau$, namely a complex oscillatory integral. Lets propose the same notation as section 1. In the long exact homology sequence of the pair $(X,X_{t})$ where $X$ is a tubular neighborhood of the singular fiber $X_0$ in the Milnor ball, 

\begin{equation}
... \to H_{n+1}(X) \to H_{n+1}(X,X_t) \stackrel{\partial_t}{\rightarrow} H_{n}(X_t) \to H_{n}(X) \to ...
\end{equation}

\vspace{0.3cm}

\noindent
$X$ is contractible. Therefore, we get an isomorphism $\partial_t:H_{n+1}(X,X_t) \cong H_{n}(X_t)$, and similar in cohomologies, i.e. $H^{n+1}(X,X_t) \cong H^{n}(X_t)$. If $\omega $ is a holomorphic differential $(n+1)$-form on $X$, and $\Gamma \in H_{n+1}(X,X_t)$, we have the following; 

\vspace{0.2cm}

\begin{proposition} (cf. \cite{AGV} Theorems 8.6, 8.7, 8.8, 11.2) Assume $\omega \in \Omega^{n+1}$, and let $\Gamma \in H_n(X,X_t)$. Then 

\begin{equation}
\displaystyle{\int_{\Gamma}e^{-\tau f}\omega}=\displaystyle{\int_{0}^{\infty}e^{-t \tau}\int_{\Gamma \cap \{f=t\}}\frac{\omega}{df}_{\mid_{X_t}}}dt=e^{\tau .f(0)} \displaystyle{\int_{\Gamma \cap \{f=t\}}\frac{\omega}{df}_{\mid_{X_t}}}
\end{equation}

\vspace{0.3cm}

\noindent
for $Re(\tau) $ large, and in this way can also be expressed as $\sum \tau^{\alpha} {\log \tau}^k A_{\alpha,k}$ in that range.
\end{proposition}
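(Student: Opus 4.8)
The plan is to establish the first equality by the Leray--Gelfand fibration of the integral along the level sets of $f$, and then to read off the stated asymptotic expansion from the small-$t$ behaviour of the resulting fibre integral via Watson's lemma. Throughout I normalize the critical value so that $f(0)=0$; then the exponential prefactor $e^{\tau f(0)}$ is $1$ and the substantive content is the representation of the integral as a Laplace transform together with its expansion for $Re(\tau)$ large.

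First I would write $\omega = df \wedge \psi$ locally, so that the Gelfand--Leray form $\omega/df := \psi|_{X_t}$ is a well-defined holomorphic $n$-form on each fibre $\{f=t\}$, independent of the choice of $\psi$ modulo forms vanishing on the fibre. Since $e^{-\tau f}$ restricts to the constant $e^{-\tau t}$ on the fibre $\{f=t\}$, Fubini's theorem applied to the fibration $f:\Gamma \to [0,\infty)$ yields
\begin{equation}
\int_{\Gamma}e^{-\tau f}\omega = \int_0^\infty e^{-\tau t}\left(\int_{\Gamma\cap\{f=t\}}\frac{\omega}{df}\Big|_{X_t}\right)dt ,
\end{equation}
which is the first claimed identity and is precisely the content of \cite{AGV} 8.6--8.7. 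Setting $I(t) := \int_{\Gamma\cap\{f=t\}}\omega/df$, the right-hand side exhibits the oscillatory integral as the Laplace transform of the period function $I$.

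Next I would analyze $I(t)$ as $t\to 0$. For each $t$ the slice $\Gamma\cap\{f=t\}$ is an $n$-cycle in the Milnor fibre $X_t$, and as $t$ varies these form a horizontal family of cycles; thus $I(t)$ is a period of the Gelfand--Leray form, i.e. the pairing of a flat homology section with a section of the Gauss--Manin system $\mathcal{G}$. By regularity of the Gauss--Manin connection (Brieskorn--Malgrange) together with the monodromy theorem, which forces $M$ to be quasi-unipotent, $I(t)$ is of Nilsson type, namely $I(t)=\sum_{\alpha,k} t^{\alpha}(\log t)^{k}\,a_{\alpha,k}(t)$ with $a_{\alpha,k}$ holomorphic, where $\alpha$ ranges over the finite set of rationals for which $e^{2\pi i\alpha}$ is an eigenvalue of $M$ and $k$ is bounded by the sizes of the Jordan blocks of $N$. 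Transporting this expansion term by term through the Laplace transform by Watson's lemma, each monomial contributes $\int_0^\infty e^{-\tau t}\,t^{\alpha}(\log t)^{k}\,dt$, which is a $k$-fold $\alpha$-derivative of a Gamma-factor times $\tau^{-\alpha-1}$ and hence a combination of $\tau^{-\alpha-1}(\log\tau)^{j}$ with $j\le k$. Summation gives the series $\sum \tau^{\alpha}(\log\tau)^{k}A_{\alpha,k}$ after the evident reindexing of exponents, as recorded in \cite{AGV} 8.8 and 11.2.

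The one genuine point to check, the remaining steps being standard, is the legitimacy of the two interchanges of limits: that the Gelfand--Leray form is integrable over the relative thimble $\Gamma$ uniformly for $Re(\tau)$ large, so that Fubini applies and the transform converges, and that the expansion of $I$ passes termwise to its Laplace transform. Here quasi-unipotence of $M$ is exactly what is needed, since it guarantees that the relevant exponents satisfy $\alpha>-1$, securing convergence of $I$ at $t=0$ and thereby validating the application of Watson's lemma.
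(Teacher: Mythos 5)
The paper contains no proof of this proposition: it is quoted directly from Arnold--Gusein-Zade--Varchenko (\cite{AGV}, Theorems 8.6--8.8 and 11.2), so the only meaningful comparison is with that cited source, and your sketch does follow its standard route --- Gelfand--Leray form plus Fubini along the level sets of $f$ for the first equality, a Nilsson-class expansion of the period function $I(t)$ coming from regularity of the Gauss--Manin connection and the monodromy theorem, and a term-by-term Laplace transform via Watson's lemma, with $\int_0^\infty e^{-\tau t}t^{\alpha}(\log t)^k\,dt=\partial_\alpha^k\bigl[\Gamma(\alpha+1)\tau^{-\alpha-1}\bigr]$ yielding the combinations $\tau^{-\alpha-1}(\log\tau)^j$, $j\le k$. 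Your normalization $f(0)=0$ is sensible: the displayed ``second equality'' is not literally meaningful as written (its right-hand side depends on $t$ while the left does not) and is better read, as the paper's own remark following the proposition indicates, as the assertion that $e^{-\tau f}\omega$ and $(\omega/df)|_{X_t}$ determine the same cohomology class under integration over corresponding cycles.

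There is, however, one substantive error, and it occurs precisely at the step you single out as the crux: the bound $\alpha>-1$ does \emph{not} follow from quasi-unipotence of $M$. Quasi-unipotence gives rationality of the exponents, and regularity gives that they are bounded below with log powers controlled by Jordan block sizes; neither forces the lower bound $-1$, and a regular connection can perfectly well have arbitrarily negative rational exponents. The inequality $\alpha>-1$ for periods of Gelfand--Leray forms of \emph{holomorphic} $(n+1)$-forms is Malgrange's theorem on the position of the Brieskorn lattice in the $V$-filtration, $H''\subset V^{>-1}$ --- the very statement the paper quotes in Section 1 (as $H''\subset V^{-1}$, citing \cite{SCHU} 1.4.10) --- equivalently, the spectrum of an isolated hypersurface singularity lies in $(-1,n)$. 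With that citation substituted, your convergence argument at $t=0$ and the appeal to Watson's lemma go through. Two cosmetic points: since $\Gamma$ lies in a Milnor ball, $f(\Gamma)$ is a bounded interval $[0,\eta]$, so either the Laplace transform must be cut off at $\eta$ (the tail is $O(e^{-\eta\,\mathrm{Re}(\tau)})$ and invisible at the asymptotic scale) or $\Gamma$ must be chosen as in \cite{PH} with image the positive real axis, as the paper notes right after the proposition; and the hypothesis should read $\Gamma\in H_{n+1}(X,X_t)$, a typo already present in the paper's statement.
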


\vspace{0.5cm}

\noindent
By Proposition 2.1, we identify the cohomology classes $\displaystyle{\int_{\Gamma}e^{-\tau f}\omega}$ and $\displaystyle{\int_{\Gamma \cap \{f=t\}}\frac{\omega}{df}_{\mid_{X_t}}}$ via integration on the corresponding homology cycles. We can also choose $\Gamma $ such that its intersection with each Milnor fiber has compact support, and its image under $f$ is the positive real line, \cite{PH}. 

\vspace{0.5cm}

\noindent
The asymptotic integral 

\[ I(\tau)=\int e^{\tau f}\phi dx_0...dx_n , \qquad \tau \to \pm \infty \] 

\vspace{0.3cm}

\noindent
satisfies

\[ \dfrac{d^p}{d\tau^p}I=\int e^{ \tau f}f^p\phi dx_0...dx_n. \]

\vspace{0.3cm}

\noindent
In case $f$ is analytic then it has an asymptotic expansion 

\[ I(\tau)=\displaystyle{\sum_{\alpha,p,q}c_{\alpha,p,q}(f)\tau^{\alpha-p}(\log \tau)^q ,\qquad \tau \to +\infty} \]

\vspace{0.3cm}

\noindent
for finite number of rational numbers $\alpha <0, \ p \in \mathbb{N}, \ 0 \leq q \leq n-1$ . Then $\phi \to c_{\alpha,p,q}(\phi) $ is a distribution with support contained in the support of $f$, \cite{MA}.  

\vspace{0.5cm}

\begin{remark} (see \cite{PH} page 27)
We have the formula;

\vspace{0.3cm}

\begin{center}
$I(\tau )= (2 \pi)^{n/2}(Hess f)^{-1/2} f(0)\tau^{-n/2}[1+O(1/\tau)]$.
\end{center}

\vspace{0.3cm}

\end{remark}

\vspace{0.2cm}

\noindent
It follows that the form $e^{-\tau f}\omega$ (for $\tau $ large enough) and the form $\displaystyle{\frac{\omega}{df}}\mid_{X_t}$, define the same cohomology class via integration on cycles. 

\vspace{0.2cm}

\begin{proposition} (\cite{AGV} lemma 11.4, 12.2, and its corollary)
There exists a basis  $\ \omega_1,..., \omega_{\mu} \ $ of $\Omega_f$ such that the corresponding  Leray residues $\ \omega_1/df,..., \omega_{\mu}/df \ $ define a basis for the sections of vanishing cohomology.

\end{proposition}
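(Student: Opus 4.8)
The plan is to produce the basis through the Gelfand--Leray (geometric) sections and to transport a $\mathbb{C}$-basis of $\Omega_f$ to a frame of the cohomological Milnor fibration by a Nakayama argument on the Brieskorn lattice. First I would fix holomorphic $(n+1)$-forms $\omega_1,\dots,\omega_\mu\in\Omega^{n+1}_{X,0}$ whose classes constitute a $\mathbb{C}$-basis of $\Omega_f=\Omega^{n+1}_{X,0}/(df\wedge\Omega^n_{X,0})$; this is possible because $\dim_{\mathbb{C}}\Omega_f=\mu<\infty$. To each $\omega_i$ I attach its geometric section $s[\omega_i](t)=[\,\omega_i/df\,|_{X_t}\,]\in H^n(X_t,\mathbb{C})$, the Leray residue of Proposition 2.1. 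The assignment $\omega\mapsto s[\omega]$ annihilates $df\wedge d\Omega^{n-1}_{X,0}$ and carries $df\wedge\Omega^n_{X,0}$ into $\partial_t^{-1}H''$ by the Leray formula $\partial_t^{-1}s[d\omega]=s[df\wedge\omega]$, so it descends to the isomorphism $s:H''/\partial_t^{-1}H''\xrightarrow{\sim}\Omega_f$ of equation (1). The problem is thereby reduced to showing that the $s[\omega_i]$ restrict to a $\mathbb{C}$-basis of each fibre $H^n(X_t,\mathbb{C})$ for $t\in T'$.

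The algebraic half is immediate. By the theorem of Brieskorn and Sebastiani, $H''$ is a free module of rank $\mu$ over the local ring $\mathbb{C}\{\partial_t^{-1}\}$, whose maximal ideal is $(\partial_t^{-1})$ and whose residue quotient $H''/\partial_t^{-1}H''$ is $\Omega_f$. Since the classes $[\omega_i]$ span this quotient, Nakayama's lemma shows that $s[\omega_1],\dots,s[\omega_\mu]$ generate $H''$ over $\mathbb{C}\{\partial_t^{-1}\}$, and a comparison of ranks forces them to be a $\mathbb{C}\{\partial_t^{-1}\}$-basis of $H''$.

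It remains to pass to the individual fibres over $T'$, that is, to show that this $\mathbb{C}\{\partial_t^{-1}\}$-basis of $H''$ restricts to a $\mathbb{C}$-basis of each $H^n(X_t,\mathbb{C})$. Using $\mathcal{H}''_{T'}\cong\mathcal{H}$ and the local freeness of $\mathcal{H}''$ of rank $\mu$, and choosing a basis $\delta_1(t),\dots,\delta_\mu(t)$ of $H_n(X_t)$, this is exactly the assertion that the period determinant $\Delta(t)=\det\bigl(\int_{\delta_j(t)}\omega_i/df\bigr)$ does not vanish for small $t\neq 0$. To see $\Delta\not\equiv 0$ one reads off the leading terms of the asymptotic expansions $\int_{\delta_j(t)}\omega_i/df=\sum t^{\alpha}(\log t)^k A_{\alpha,k}$ furnished by Proposition 2.1 and Remark 2.2: the orders of vanishing are the spectral numbers attached to the $\omega_i$, and the independence of the $[\omega_i]$ in $\Omega_f$ makes the corresponding leading coefficients linearly independent, so $\Delta$ is a nonzero holomorphic (multivalued) function; by the regularity of the Gauss--Manin connection it then vanishes nowhere on $T'$.

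The main obstacle is precisely this last, analytic, step. The Nakayama argument is formal, but it produces a basis with respect to the $\mathbb{C}\{\partial_t^{-1}\}$-structure, in which $\Omega_f$ is the special fibre, whereas the fibres $H^n(X_t)$ are governed by the $\mathcal{O}_T=\mathbb{C}\{t\}$-structure; reconciling the two amounts to the non-degeneracy of the period matrix, i.e. to controlling the leading asymptotics of the oscillatory and Leray integrals and showing they are independent. This non-vanishing of $\Delta(t)$ --- rather than the generation statement --- is where the real work lies, and it is precisely the content of the cited Lemmas 11.4 and 12.2 of \cite{AGV}.
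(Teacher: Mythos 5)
First, note what you are being compared against: the paper offers \emph{no} proof of this proposition at all --- it is quoted verbatim from the literature (\cite{AGV}, Lemmas 11.4, 12.2 and corollary) --- so your attempt has to stand on its own. It does not, because the step that carries all the weight is false as stated. You claim that ``the independence of the $[\omega_i]$ in $\Omega_f$ makes the corresponding leading coefficients linearly independent.'' Independence in $\Omega_f\cong H''/\partial_t^{-1}H''$ does \emph{not} control the principal parts of the geometric sections: if $\alpha(\omega_1)<\alpha(\omega_2)$ are the $V$-orders, then $\{\omega_1,\,\omega_2+\omega_1\}$ is just as good a basis of (the relevant piece of) $\Omega_f$ as $\{\omega_1,\omega_2\}$, yet $s[\omega_1]$ and $s[\omega_2+\omega_1]$ have \emph{identical} principal parts, so the matrix of leading coefficients is degenerate and the naive leading term of your period determinant $\Delta(t)$ vanishes. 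The repair is available precisely because the proposition is existential: choose the basis adapted to the $V$-filtration $V^\alpha\Omega_f=pr(V^\alpha\cap H'')$ of the paper's equation (3), i.e.\ inducing a basis of $\bigoplus_\alpha Gr_V^\alpha\Omega_f$. Then each $s[\omega_i]$ has a well-defined order $\alpha_i$, its principal part is its nonzero symbol in $C^{\alpha_i}$, and independence modulo $Gr_V^{\alpha_i}\partial_t^{-1}H''$ forces independence of the principal parts. After that one still needs the lemma ``principal parts form a basis of the canonical fiber $\Longrightarrow$ the sections $s[\omega_i]$ trivialize the cohomology bundle on a small punctured disc'' --- and that lemma \emph{is} the content of AGV 11.4/12.2, which your closing sentence concedes you are invoking rather than proving. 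So the attempt, as an independent proof, is circular at the decisive point; as a gloss on the citation it is fine, which incidentally matches the paper's own (citation-only) treatment.

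Two secondary problems. The ``algebraic half'' misattributes and misfires: Brieskorn--Sebastiani give freeness of $H''$ of rank $\mu$ over $\mathbb{C}\{t\}$; freeness over the microlocal ring $\mathbb{C}\{\{\partial_t^{-1}\}\}$ (power series in $\partial_t^{-1}$ with a Gevrey/Borel convergence condition, not naive convergent series) is M.~Saito's theorem \cite{SAI6}. Even once corrected, Nakayama there produces a $\mathbb{C}\{\{\partial_t^{-1}\}\}$-basis, which controls neither $H''/tH''$ nor the fibers $H^n(X_t,\mathbb{C})$: one cannot pass between the two quotients, since $tH''\subset\partial_t^{-1}H''$ holds iff $f\in(\partial f)$, i.e.\ (by K.~Saito) only in the quasihomogeneous case --- indeed for non-quasihomogeneous $f$ a basis of $\Omega_f$ may contain a class $[f\omega]$ with $s[f\omega]=t\,s[\omega]\equiv 0$ in $H''/tH''$. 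So this half of your argument, as you partly admit, does no work toward the conclusion. By contrast, your use of regularity at the end is sound and worth keeping: monodromy multiplies the period matrix by a constant invertible matrix, so $\Delta(t)=t^{\beta}g(t)$ with $g$ meromorphic at $0$ by moderate growth, whence the zeros of $\Delta$ cannot accumulate at $0$ and one concludes by shrinking the disc.
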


\vspace{0.5cm}

\section{Extension of the Gauss-Manin system}

\vspace{0.5cm}

As mentioned in the introduction the Gauss-Manin system of an isolated hypersurface singularity can be extended over the puncture by a process of gluing vector bundles. The gluing is done by some comparison between $V$-lattices and the Brieskorn lattices in $\mathcal{G}$. We are interested to understand the fiber of $\mathcal{G}$ on $0$ after the extension.
Let $\Omega^{n+1}[\tau, \tau^{-1}]$ be the space of Laurent polynomials with coefficients in $\Omega^{n+1}$. According to its very definition ( \cite{PS} sec. 10.4, \cite{SAI6} lemma 2.4), the Gauss-Manin System is given by;

\vspace{0.3cm}

\begin{center}
$\mathcal{G}=\dfrac{\Omega^{n+1}[\tau,\tau^{-1}]}{(d-\tau df \wedge)\Omega^{n+1}[\tau,\tau^{-1}]}$, \\[0.5cm] 
$(d-\tau df \wedge)\sum_k\eta_k\tau^k=\sum_k(d\eta_k-df \wedge \eta_{k-1})\tau^k $.
\end{center}

\vspace{0.3cm}

\noindent
We put $\tau=t^{-1}$, and consider $(\tau, t)$ as coordinates on $\mathbb{P}^1$. Then $\mathcal{G}$ is a $\mathbb{C}[t,t^{-1}]$-module with connection and $\partial_{\tau}=-t^2\partial_t$. In the chart $t$, the Brieskorn lattice 

\vspace{0.3cm}

\begin{center}
$\mathcal{H}^{(0)}:=image (\Omega^{n+1}[\tau^{-1}] \to \mathcal{G})=\dfrac{\Omega^{n+1}[t]}{(td-df \wedge)\Omega^{n+1}[t]}$
\end{center}

\vspace{0.3cm}

\noindent
is a free $\mathbb{C}[t]$ module of rank $\mu$. It is stable by the action of $\partial_{\tau}=-t^2\partial_t$. Therefore $\partial_t$ is a connection on $\mathcal{G}$ with a pole of order $2$. We consider the increasing exhaustive filtration $\mathcal{G}_p:=\tau^p\mathcal{H}^{(0)}$ of $\mathcal{G}$.

In the chart $\tau$, there are various natural lattices indexed by $\mathbb{Q}$, we denote them by $V^{\alpha}$, with $V^{\alpha-1}=\tau V^{\alpha}$. On the quotient space $C_{\alpha}=V^{\alpha}/V^{>\alpha}$ there exists a nilpotent endomorphism $(\tau\partial_{\tau}-\alpha)$. The space $\oplus_{\alpha \in [0,1[}C_{\alpha}$ is isomorphic to $H^n(X_{\infty},{\mathbb{C}})$, and $\oplus_{\alpha \in [0,1[}F^pC_{\alpha}$  is the limit MHS on $H^n(X_{\infty},{\mathbb{C}})$. A basic isomorphism can be constructed, as 

\vspace{0.3cm}

\begin{center}
$\begin{CD}
\dfrac{\mathcal{G}_p\cap V^{\alpha}}{\mathcal{G}_{p-1}\cap V^{\alpha}+\mathcal{G}_p \cap V^{>\alpha}}=Gr_F^{n-p}(C_{\alpha}) \\   
@V\tau^pV{\cong}V \\
\dfrac{ V^{\alpha+p} \cap \mathcal{G}_0}{ V^{\alpha}\cap \mathcal{G}_{-1}+ V^{>\alpha} \cap \mathcal{G}_0}=Gr_{\alpha+p}^V(\mathcal{G}_0/\mathcal{G}_{-1})
\end{CD}$.
\end{center}

\vspace{0.3cm}

\noindent
Thus, the gluing is done via the isomorphisms, $ Gr_F^{n-p}(H_{\lambda}) \cong Gr_{\alpha+p}^V(\mathcal{H}^{(0)}/\tau^{-1}. \mathcal{H}^{(0)})$ where $\lambda=\exp{(2 \pi i \alpha)}$ and we have chosen $-1 \leq \alpha <0$ (cf. \cite{SA3}, \cite{SA5}, \cite{SA6}). We have
 
\begin{equation} 
\dfrac{\mathcal{H}^{(0)}}{\tau^{-1}.\mathcal{H}^{(0)}}= \dfrac{\Omega^{n+1}}{df \wedge \Omega^{n}} =\Omega_f 
\end{equation} 

\vspace{0.3cm}

\noindent
canonically. The identity (7) defines the extension fiber of the Gauss-Manin system of the isolated singularity $f:\mathbb{C}^{n+1} \to \mathbb{C}$. The same conclusion can be obtained when $f$ is a holomorphic germ, However one needs to consider the completions of the modules involved, (see \cite{MA} page 422 or \cite{S1}). In this way for $f:\mathbb{C}^{n+1} \to \mathbb{C}$ we have the same equation as (7). By identifying the sections with sections of relative cohomology via Theorem 3.1, this formula is a direct consequence of the formula 

\[ \displaystyle{\int_{\Gamma}e^{-\tau f}d\omega}=\tau \displaystyle{\int_{\Gamma}e^{-\tau f} df \wedge \omega} , \qquad \omega \in \Omega_X^n .\]

\vspace{0.3cm}

We refer to \cite{MA} page 422 for details.

\vspace{0.5cm}

\section{MHS on the extended fiber}

\vspace{0.5cm}

Asuume $f:\mathbb{C}^{n+1} \to \mathbb{C}$ is a germ of isolated singularity. Up to now we extended the local system of vanishing cohomology over the degeneracy point $0$. Now our task is how to set a mixed Hodge structure on that. We build an isomorphism 

\vspace{0.3cm}

\begin{center}
$\Phi:H^n(X_{\infty},\mathbb{C}) \to \Omega_f$
\end{center}

\vspace{0.3cm}

\noindent
which allows us to equip a mixed Hodge structure on $\Omega_f$, \cite{H1}, \cite{SAI6}. This also motivates the definition of opposite filtrations of M. Saito. It is based on the following theorem. 

\vspace{0.2cm}

\begin{proposition} (\cite{H1} prop. 5.1)
Assume $\{(\alpha_i,d_i)\}$ is the spectrum of a germ of isolated singularity $f:\mathbb{C}^{n+1} \to \mathbb{C}$. There exists elements $s_i \in C^{\alpha_i}$ with the properties

\vspace{0.2cm}

\begin{itemize}
\item[(1)] $s_1,...,s_{\mu}$ project onto a $\mathbb{C}$-basis of $\bigoplus_{-1<\alpha<n} Gr_V^{\alpha} H''/Gr_V^{\alpha} \partial_t^{-1}H''$. 

\item[(2)] $s_{\mu+1}:=0$; there exists a map $\nu:\{1,...,\mu\} \to \{1,...,\mu,\mu+1 \}$ with $(t-(\alpha_i+1)\partial_t^{-1})s_i =s_{\nu(i)}$

\item[(3)] There exists an involution $\kappa:\{1,...,\mu\} \to \{1,...,\mu \}$ with $\kappa(i)=\mu+1-i$ if $\alpha_i \ne \frac{1}{2}(n-1)$ and $\kappa(i)=\mu+1-i$ or $\kappa(i)=i$ if $\alpha_i = \frac{1}{2}(n-1)$, and  

\[ P_S(s_i,s_j)= \pm \delta_{(\mu+1-i)j}.\partial_t^{-1-n} \]

\vspace{0.2cm}

\noindent
where $P_S$ is the K. Saito higher residue pairing.
\end{itemize}

\end{proposition}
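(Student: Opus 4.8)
The plan is to treat the Brieskorn lattice $H''$ as a free $\mathbb{C}\{t\}$-module of rank $\mu$ equipped with the two operators that appear in the statement, namely $t$ and $\partial_t^{-1}$ (the latter acting as $df\wedge$, cf. Section 1), together with K. Saito's higher residue pairing $P_S$, which takes values in $\partial_t^{-1-n}\mathbb{C}\{\partial_t^{-1}\}$ and whose leading coefficient is the Grothendieck residue. The three assertions are best read as a single normal-form statement: one must produce a homogeneous basis of $\bigoplus_{-1<\alpha<n}Gr_V^\alpha(H''/\partial_t^{-1}H'')=\bigoplus Gr_V^\alpha\Omega_f$ that is simultaneously adapted to the $V$-grading (property (1)), arranged into strings under the operator $t-(\alpha_i+1)\partial_t^{-1}$ (property (2)), and anti-diagonal for $P_S$ (property (3)). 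First I would record the structural identities that drive the whole construction: $P_S$ is symmetric up to a sign under the conjugation $\partial_t^{-1}\mapsto-\partial_t^{-1}$, the operator $\partial_t^{-1}$ is skew-adjoint for $P_S$, and---the key point---the operator $t-(\alpha+1)\partial_t^{-1}$ is (skew-)self-adjoint up to the grading shift. From these the spectral symmetry $\alpha_i+\alpha_{\mu+1-i}=n-1$ and the distinguished value $\alpha=\tfrac12(n-1)$ both follow.

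Next I would build the basis in stages. Choosing any homogeneous lifts $s_i\in C^{\alpha_i}$ of a basis of $\bigoplus Gr_V^\alpha\Omega_f$ gives (1) at once. For (2), observe that $t-(\alpha_i+1)\partial_t^{-1}$ raises the $V$-index by one in a controlled way and that, after passing to $Gr_V$, it is exactly the nilpotent operator which Varchenko's theorem (Section 1) identifies with $Gr(f)$, i.e. with $N$ up to the factor $-2\pi i$. Its Jordan strings therefore partition $\{1,\dots,\mu\}$, and one may choose the $s_i$ to be string bases: each string is an orbit $s_i\mapsto s_{\nu(i)}\mapsto\cdots$ terminating at $s_{\mu+1}:=0$. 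Carrying this out inside each graded piece and lifting realizes (1) and (2) together.

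For (3) I would transport the pairing across the strings using the adjunction identities. Skew-self-adjointness of $t-(\alpha+1)\partial_t^{-1}$ forces $P_S$ to pair a string beginning in $C^{\alpha}$ nondegenerately with the reversed string in $C^{n-1-\alpha}$; this yields the involution $\kappa$ with $\kappa(i)=\mu+1-i$, and at the self-dual level $\alpha=\tfrac12(n-1)$ the alternative $\kappa(i)=i$. A Gram--Schmidt normalization performed one string at a time---rescaling the head of each $\nu$-string and propagating the choice down the string and to its $\kappa$-partner by the adjunction---then brings the leading term of $P_S(s_i,s_j)$ to $\pm\delta_{(\mu+1-i)j}\,\partial_t^{-1-n}$, the sign being fixed by the symmetry type of $P_S$. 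Since the string structure and the pairing are both compatible with the adjunctions, these normalizations do not interfere, and the remaining higher-order terms in $\partial_t^{-1}$ can be cleared by a unipotent change of basis preserving (1) and (2).

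The main obstacle is precisely this last compatibility: achieving the exact normalization (3) without disturbing the clean string action (2). Everything hinges on the (skew-)self-adjointness of $t-(\alpha+1)\partial_t^{-1}$ for $P_S$, so that normalizing the head of a $\nu$-string automatically controls the whole string and its $\kappa$-partner. Verifying this adjunction at the level of the full higher residue pairing, rather than only for its Grothendieck-residue leading term, is where the real work lies; it is also what ultimately produces the Riemann--Hodge relation for the Grothendieck pairing used in the main theorem.
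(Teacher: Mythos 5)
First, a point of orientation: the paper contains no proof of this proposition at all --- it is imported verbatim from Hertling \cite{H1} (Prop.\ 5.1, building on M.~Saito \cite{SAI6}), so your attempt has to be measured against that argument rather than against anything in the text. Your scaffolding has the right shape: reading (1)--(3) as a single normal-form statement, noting that $t-(\alpha_i+1)\partial_t^{-1}$ sends $C^{\alpha}$ to $C^{\alpha+1}$ and induces on $Gr_V$ exactly Varchenko's $Gr(f)$, i.e.\ $N$ up to an invertible factor, and organizing the basis into strings dual under $P_S$ across $\alpha \leftrightarrow n-1-\alpha$ is indeed how the statement is structured.

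The genuine gap is hidden in the two sentences ``Carrying this out inside each graded piece and lifting realizes (1) and (2) together'' and ``these normalizations do not interfere.'' Lifting a Jordan basis of $Gr(f)$ to elements of $Gr_V^{\alpha}H''$ satisfying the \emph{exact} equations $(t-(\alpha_i+1)\partial_t^{-1})s_i=s_{\nu(i)}$ is a Birkhoff-problem statement: propagating a head lift down a string produces at each tail an error term lying in $Gr_V\partial_t^{-1}H''$ (invisible in the quotient, so (1) is untouched, but fatal for (2)), and it is not formal that these errors can be absorbed by re-choosing the heads. Solvability of this system is equivalent to the existence of a filtration opposite to the Hodge filtration $F$ on $H^n(X_{\infty},\mathbb{C})$ compatible with $N$ --- and, for (3), with $S$ --- and this is exactly where the actual proofs invoke the polarized mixed Hodge structure: the $s_i$ are taken as $\psi_{\alpha}$-images (composed with powers of $\partial_t$) of a basis of vanishing cohomology adapted to the Deligne bigrading $I^{p,q}$, respectively to an $\mathfrak{sl}_2$-split real splitting; the string equations then express that $N$ is a $(-1,-1)$-morphism of the MHS, and the normalization of $P_S$ comes from anti-diagonalizing the polarization $S$ on the primitive parts of $Gr^W$ and transporting along $N$-strings, via the formula expressing $P_S$ on elementary sections through $S$ on the $A$-side. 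Your proposal never uses $F$, $W$ or $S$, so it cannot supply this input; moreover you yourself flag the crucial adjunction for the full higher residue pairing as unverified (``where the real work lies''), and your final ``unipotent change of basis'' must also kill the components of $P_S(s_i,s_j)$ in $\partial_t^{-1-n-m}$ with $m>0$ (pairs with $\alpha_i+\alpha_j>n-1$), a constraint that leading-term Gram--Schmidt does not control and whose repair generically destroys the exact equations in (2). In short: correct skeleton, but the substance of the proposition --- achieving (1), (2), (3) \emph{simultaneously} --- is Hodge-theoretic (M.~Saito's opposite filtrations plus Hertling's $\mathfrak{sl}_2$/primitive-decomposition refinement), not lattice-side linear algebra, and the proposal as written does not close that gap.
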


\vspace{0.2cm}

Suppose,

\vspace{0.3cm}

\begin{center}
$H^n(X_{\infty}, \mathbb{C})= \displaystyle{\bigoplus_{p,q,\lambda}} I^{p,q}_{\lambda}$ 
\end{center}

\vspace{0.3cm}

\noindent
is the Deligne-Hodge bigrading, and generalized eigen-spaces of vanishing cohomology, and also $\lambda=\exp(-2\pi i \alpha)$ with $\alpha \in (-1,0]$. Consider the isomorphism obtained by composing the three maps,

\vspace{0.3cm}

\begin{equation}
\Phi_{\lambda}^{p,q}: I^{p,q}_{\lambda} \stackrel{\hat{\Phi}_{\lambda}}{\longrightarrow}Gr_V^{\alpha+n-p}H'' \stackrel{pr}{\longrightarrow} Gr_V^{\bullet} H^{\prime \prime}/\partial_t^{-1}H^{\prime \prime} \stackrel{\cong}{\longrightarrow} \Omega_f
\end{equation}

\noindent
where 

\begin{center}
$\Phi=\bigoplus_{p,q,\lambda} \Phi_{\lambda}^{p,q}, \qquad \Phi_{\lambda}^{p,q}=pr \circ \hat{\Phi}_{\lambda}^{p,q}, \qquad \hat{\Phi}_{\lambda}^{p,q}:= \partial_t^{p-n} \circ \psi_{\alpha}|  I^{p,q}_{\lambda}$
\end{center}

\vspace{0.3cm}

\noindent
\textbf{Lemma:} The map $\Phi$ is a well-defined $\mathbb{C}$-linear isomorphism. 

\vspace{0.3cm}

\noindent
We list some of the properties of the map $\phi$ as follows;

\begin{itemize}

\vspace{0.3cm}

\item $\hat{\Phi}_{\lambda}^{p,q}$ takes values in $C^{\alpha+n-p}$. By the formula $F^p=\oplus_{r \geq p}I^{r,s}$, any cohomology class in $I_{\lambda}^{p,q}$, is of the form $\psi_{\alpha}^{-1} [\partial_t^{n-p} h^{\prime \prime}+V^{>\alpha}]=\psi_{\alpha}^{-1} \partial_t^{n-p} [h^{\prime \prime}+V^{>\alpha+n-p}]$, for $h^{\prime \prime} \in H^{\prime \prime}$, cf. def. 6.3.3. By substituting in the formula it explains the image of $\hat{\Phi}_{\lambda}^{p,q}$.
\item Taking two different representatives $\omega_1, \omega_2 \in \Omega_X^{n+1}$ for $ h''$ does not effect on the class $h''+V^{>\alpha+n-p}$. Because by identifying $H''$ with its image in $V^{-1}$, the difference $\omega_1- \omega_2$ belongs to $ V^{>\alpha+n-p}$.
\item The map $\Phi$ is obviously a $\mathbb{C}$-linear  isomorphism because both of the $\psi_{\alpha}$ and $\partial_t^{-1}$ are $\mathbb{C}$-linear isomorphisms on the appropriate domains, and 

\vspace{0.3cm}

\begin{center} 
$\Phi(I^{p,q}_{\lambda}) \subset \Phi(F^pH^n(X_{\infty})_{\lambda}) \subset V^{\alpha} \cap H^{\prime \prime}/V^{\alpha+1} \hookrightarrow  Gr_V^{\alpha}H^{\prime \prime}/\partial_t^{-1} H^{\prime \prime}$
\end{center}

\vspace{0.3cm}

\item The definition of $\Phi$ fixes an isomorphism $Gr_V^{\bullet} \Omega_f \cong \Omega_f$. On the eigen space $H_{\lambda}$ this corresponds to a choice of sections of 
$Gr_V^{\alpha}(V^{\alpha} \cap H'') \to Gr_V^{\alpha}H''/\partial_t^{-1}H''$ for $-1 \leq \alpha <0$.

\vspace{0.3cm}

\end{itemize} 

\vspace{0.5cm}

The mixed Hodge structure on $\Omega_f$ is defined by using the isomorphism $\Phi$. This means that
 
\vspace{0.3cm}
 
\begin{center}
$W_k(\Omega_f)=\Phi W_kH^n(X_{\infty},\mathbb{Q}),  
 \qquad F^p(\Omega_f)= \Phi F^pH^n(X_{\infty},\mathbb{C})$ 
\end{center}

\vspace{0.3cm}
 
\noindent 
and all the data of the Steenbrink MHS on $H^n(X_{\infty},\mathbb{C})$ such as the $\mathbb{Q}$ or $\mathbb{R}$-structure is transformed via the isomorphism $\Phi$ to that of $\Omega_f$. Specifically; in this way we also obtain a conjugation map

\vspace{0.3cm}

\begin{equation}
\bar{.}:\Omega_{f,\mathbb{Q}} \otimes \mathbb{C} \to \Omega_{f,\mathbb{Q}} \otimes \mathbb{C}, \qquad \Omega_{f,\mathbb{Q}}:=\Phi H^n(X_{\infty},\mathbb{Q})
\end{equation}

\vspace{0.3cm}

\noindent
defined from the conjugation on $H^n(X_{\infty},\mathbb{C})$ via this isomorphism. 

\vspace{0.2cm}

\noindent
The basis discussed in 4.1 is usually called a good basis. The condition (1) correspond to the notion of opposite filtrations. Two filtrations $F$ and $U$ on $H$ are called opposite (cf. \cite{SAI6} sec. 3) if 

\[ Gr_p^FGr_U^q H=0 , \qquad \text{for} \ p \ne q. \]

\vspace{0.3cm}

\noindent
When one of the filtrations is decreasing say $\{F^p\}$ and the other increasing say $\{U_q\}$ then this is equivalent to 

\vspace{0.3cm}

\begin{equation}
H=F^p\oplus U_{p-1}, \qquad \forall p.
\end{equation}

\vspace{0.3cm}

\noindent
Similarly, two decreasing filtrations $F$ and $U$ are said to be opposite if $F$ is opposite to the increasing filtration $U'_q:=U^{k-q}$, \cite{P2}. Note that $V^{\alpha} H''$ is the submodule generated by $s(V^{\alpha}\Omega_f)$. 

\vspace{0.2cm}

\begin{proposition} (\cite{SAI6} prop. 3.5)
The filtration

\vspace{0.3cm} 

\[ U^pC^{\alpha}:=C^{\alpha} \cap V^{\alpha+p}H'' \]

\vspace{0.2cm}

\noindent
is opposite to the filtration Hodge filtration $F$ on $\mathcal{G}$.
\end{proposition}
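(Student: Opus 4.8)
The plan is to recognize $U$ and $F$ as the two incarnations on $C^{\alpha}$ of the only two canonical filtrations present on the Gauss--Manin system, namely the $V$-filtration $V^{\bullet}$ and the pole-order (Brieskorn) filtration $\mathcal{G}_{p}=\tau^{p}\mathcal{H}^{(0)}$, and then to deduce opposite-ness from their bistrictness. Since both $F$ and $U$ are decreasing on $C^{\alpha}$, the definition recalled in (10) together with the convention $U'_{q}:=U^{n-q}$ says that opposite-ness is the assertion
\[ Gr_{F}^{a}Gr_{U}^{b}\,C^{\alpha}=0\qquad\text{whenever } a+b\neq n, \]
equivalently $C^{\alpha}=F^{p}\oplus U^{n-p+1}$ for every $p$. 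So it suffices to produce, off the diagonal $a+b=n$, the vanishing of the double graded piece.

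First I would rewrite each filtration through the bifiltration $(\mathcal{G}_{\bullet},V^{\bullet})$. By the definition (2) of the limit Hodge filtration via the Brieskorn lattice, $F^{a}C^{\alpha}$ is the image of $\mathcal{G}_{n-a}\cap V^{\alpha}$ in $C^{\alpha}=V^{\alpha}/V^{>\alpha}$; on the other hand $U^{b}C^{\alpha}=C^{\alpha}\cap V^{\alpha+b}H''$ is, by construction, the piece of $C^{\alpha}$ cut out by the $V$-order of representatives in the Brieskorn lattice, so that $Gr_{U}^{b}C^{\alpha}\cong Gr_{V}^{\alpha+b}\Omega_{f}$. Thus $Gr_{F}^{a}$ sees the pole order while $Gr_{U}^{b}$ sees the $V$-order, and opposite-ness is exactly the statement that these two orders are carried onto one another by the shift $\tau^{\,n-a}$, with $b=n-a$.

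The heart of the argument is then the fundamental isomorphism already displayed in Section 3,
\[ \frac{\mathcal{G}_{p}\cap V^{\alpha}}{\mathcal{G}_{p-1}\cap V^{\alpha}+\mathcal{G}_{p}\cap V^{>\alpha}}\ \stackrel{\tau^{p}}{\longrightarrow}\ Gr_{\alpha+p}^{V}\bigl(\mathcal{G}_{0}/\mathcal{G}_{-1}\bigr)=Gr_{V}^{\alpha+p}\Omega_{f}, \]
whose left-hand side is $Gr_{F}^{n-p}C^{\alpha}$. This single isomorphism says precisely that $(\mathcal{G}_{\bullet},V^{\bullet})$ is bistrict, i.e. the iterated gradeds $Gr_{\mathcal{G}}Gr_{V}$ and $Gr_{V}Gr_{\mathcal{G}}$ coincide with no cross terms, and under the dictionary of the previous paragraph it identifies $Gr_{F}^{n-p}C^{\alpha}$ and $Gr_{U}^{p}C^{\alpha}$ as one and the same subquotient $Gr_{V}^{\alpha+p}\Omega_{f}$. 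Reading this off gives both the diagonal matching $Gr_{F}^{n-p}\cong Gr_{U}^{p}$ and the off-diagonal vanishing, hence opposite-ness. I would therefore take as the real task the establishment of this isomorphism (quoting \cite{SCHU}, \cite{SAI6} where convenient), which rests on the structure theory of the lattice: $\mathcal{H}^{(0)}$ is $\mathbb{C}\{t\}$-free of rank $\mu$, stable under $\tau^{-1}=\partial_{t}^{-1}$, Malgrange regularity gives $H''\subset V^{-1}$, and the self-duality furnished by the higher residue pairing of Proposition 4.1 pins down how $V$-orders match across the powers of $\tau$.

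The hard part will be the bistrictness itself: showing that intersecting the pole-order filtration with the $V$-filtration produces no collapse, so that $\tau^{p}$ above is injective with image exactly the stated quotient. Concretely one must show every class of $F^{p}C^{\alpha}$ has a representative already lying in $V^{\alpha}\cap\mathcal{G}_{n-p}$, and that distinct $V$-orders survive after passing to $Gr_{\mathcal{G}}$; this is where freeness and the non-degeneracy of the residue pairing are indispensable, since for a general, non-self-dual lattice the two filtrations need not be opposite. I would finish by checking compatibility of $U$ with the monodromy weight filtration $W=W(N)$, using $N=\tau\partial_{\tau}-\alpha$ and the fact that $\tau\partial_{\tau}$ preserves $\mathcal{G}_{\bullet}$, so that the induced bigrading refines $W$ and the pair $(F,U)$ splits the limit mixed Hodge structure in the sense used for $\Phi$ and the bigrading $I^{p,q}_{\lambda}$.
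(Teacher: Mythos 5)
There is a genuine gap, and it sits exactly where you locate the ``heart of the argument.'' First, note that the paper itself offers no proof of this proposition: it is quoted from M. Saito (\cite{SAI6}, Prop.~3.5) and simply used afterwards, so the relevant comparison is with Saito's proof, not with anything in the present text. Your central claim --- that the displayed isomorphism $\tau^p\colon \mathcal{G}_p\cap V^{\alpha}/(\mathcal{G}_{p-1}\cap V^{\alpha}+\mathcal{G}_p\cap V^{>\alpha})\;\cong\;Gr^V_{\alpha+p}(\mathcal{G}_0/\mathcal{G}_{-1})$ ``says precisely that $(\mathcal{G}_{\bullet},V^{\bullet})$ is bistrict'' and that oppositeness can be ``read off'' from it --- does not hold. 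Multiplication by $\tau^{\pm p}$ is bijective on $\mathcal{G}$ and carries $\mathcal{G}_p$ to $\mathcal{G}_0$ and $V^{\alpha}$ to $V^{\alpha+p}$, so the two sides are the same subquotient transported by a unit; the isomorphism is essentially a tautological re-indexing and carries no transversality information. At best it gives the matching of graded pieces $Gr_F^{n-p}C^{\alpha}\cong Gr_U^{p}C^{\alpha}$, i.e.\ equality of dimensions in complementary degrees. That is necessary but not sufficient for oppositeness: on $\mathbb{C}^2$ with $n=1$, taking $F^1=U^1=L$ one fixed line gives $\dim Gr_F^{a}=\dim Gr_U^{1-a}$ for all $a$, yet $Gr_F^1Gr_U^1=L\neq 0$. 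Oppositeness is the direct-sum assertion $C^{\alpha}=F^p\oplus U^{n-p+1}$ for every $p$, and your sketch never establishes it; you flag it yourself as ``the hard part'' (the bistrictness) and then propose to quote \cite{SAI6} for it --- which is circular, since that is the very statement being proved.

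The paper even signals why your route would fail: immediately after the proposition it remarks that in general $H''\cap V^{\alpha}\mathcal{G}\neq (H''\cap V^{\alpha})+(H''\cap V^{>\alpha})$, i.e.\ precisely the naive strictness your argument would need is false, which is why one must pass to the gradings $Gr_V^{\alpha}$ at all. Saito's actual proof runs on different fuel: the expression (2) of the Hodge filtration as $\psi_{\alpha}^{-1}\partial_t^{n-p}Gr_V^{\alpha+n-p}\mathcal{H}^{(0)}$, the freeness of $\mathcal{H}^{(0)}$ and the action of $t$ and $\partial_t^{-1}$, organized into an inductive construction of what the present paper records as the good bases of Proposition 4.1 --- condition (1) there ($s_1,\dots,s_{\mu}$ projecting onto a basis of $\bigoplus_{\alpha}Gr_V^{\alpha}H''/Gr_V^{\alpha}\partial_t^{-1}H''$) is essentially equivalent to the oppositeness statement, and the residue-pairing normalization in condition (3) is the duality input you gesture at but never use. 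Your final paragraph on compatibility of $U$ with the weight filtration $W(N)$ is not part of the statement and does not repair the missing direct-sum decomposition.
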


\vspace{0.3cm}

\noindent 
By the Proposition 4.2 the two filtrations $F^p$ and 

\vspace{0.2cm}

\begin{center}
$U'_q:=U^{n-q}=\psi^{-1} \{ \oplus_{\alpha} C^{\alpha} \cap V^{\alpha+n-q}H'' \} =\psi^{-1} \{\oplus_{\alpha}Gr_V^{\alpha}[V^{\alpha+n-q}H''] \}$ 
\end{center}

\vspace{0.2cm}

\noindent
are two opposite filtrations on $H^n(X_{\infty},\mathbb{C})$. We also have

\[ F^pH^n(X_{\infty},\mathbb{C})_{\lambda} \cong U'_pH^n(X_{\infty},\mathbb{C}). \]

\vspace{0.3cm}

\noindent
A standard example of such a situation is when the variation of MHS namely $\mathcal{H}$ is mixed Tate (also called Hodge-Tate). By definition a mixed Tate Hodge structure $H$ is when $Gr_{2l-1}^W H=0$ and $Gr_{2l}^WH =\oplus_i \mathbb{Q}(-n_i)$. In that case one easily shows the Deligne-Hodge decomposition becomes 

\[ \bigoplus_p (W_{2p}\cap F^p) H = H_{\mathbb{C}} \]

\noindent
and the two filtrations $F$ and $W$ are opposite.

In general 

\[ H'' \cap V^{\alpha}\mathcal{G} \ne (H'' \cap V^{\alpha}) + (H'' \cap V^{>\alpha}) \]

\vspace{0.3cm}

\noindent
This is why we have to take grading $Gr_V^{\alpha}$.

The complex structure defined on $\Omega_f$ via $\Phi:H^n(X_{\infty}) \cong \Omega_f$ is not unique, and it depends to the good basis chosen. However it does not affect the polarization, discussed in the next section.

\vspace{0.5cm}

\section{Polarization form on extension}

\vspace{0.5cm}

This section concerns the main contribution. Assume $f:\mathbb{C}^{n+1} \to \mathbb{C}$ is a germ of isolated singularity. In Section 4 we described the extension of MHS associated to $f$ over the degenerate point $0$. In Section 5 we defined a MHS on the new fiber appearing over $0$, namely $\Omega_f$. The MHS was defined by the specific isomorphism $\Phi$ in (9). We use the isomorphism 
$\Phi: H^n(X_{\infty}, \mathbb{C}) \to \Omega_f$  introduced in the previous section to express a correspondence between polarization form on vanishing cohomology and the Grothendieck pairing on $\Omega_f$.

\vspace{0.2cm}

\begin{theorem}  
Assume $f:(\mathbb{C}^{n+1},0) \to (\mathbb{C},0)$, is a holomorphic germ with isolated singularity at $0$. Then, the isomorphism $\Phi$ makes the following diagram commutative up to a complex constant;

\begin{equation}
\begin{CD}
\widehat{Res}_{f,0}:\Omega_f \times \Omega_f @>>> \mathbb{C}\\
@VV(\Phi^{-1},\Phi^{-1})V                   @VV \times *V \\
S:H^n(X_{\infty}) \times H^n(X_{\infty}) @>>> \mathbb{C}
\end{CD} \qquad \qquad  * \ne 0
\end{equation}

\vspace{0.3cm}

\noindent
where, 

\[ \widehat{Res}_{f,0}=\text{res}_{f,0}\ (\bullet,\tilde{C}\ \bullet) \]

\vspace{0.3cm}

\noindent
and $\tilde{C}$ is defined relative to the Deligne decomposition of $\Omega_f$, via the isomorphism $\Phi$. If $J^{p,q}=\Phi^{-1} I^{p,q}$ is the corresponding subspace of $\Omega_f$, then

\begin{equation}
\Omega_f=\displaystyle{\bigoplus_{p,q}}J^{p,q} \qquad \tilde{C}|_{J^{p,q}}=(-1)^{p} 
\end{equation}

\vspace{0.3cm}

\noindent
In other words;

\vspace{0.2cm}

\begin{equation}
S(\Phi^{-1}(\omega),\Phi^{-1}(\eta))= * \times \ \text{res}_{f,0}(\omega,\tilde{C}.\eta), \qquad 0 \ne * \in \mathbb{C}
\end{equation}

\vspace{0.3cm}

\end{theorem}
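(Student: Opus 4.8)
The plan is to reduce the stated identity to the diagonalization of the K.\ Saito higher residue pairing $P_S$ on the good basis of Proposition 4.1, using the explicit shape of $\Phi$ together with Varchenko's comparison between $N$ and multiplication by $f$ from Section 1. Both the polarization $S$ and the Grothendieck residue $\text{res}_{f,0}$ will be realized as coefficients of the single pairing $P_S$, after which property (3) of Proposition 4.1 turns the comparison into essentially a sign count.

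First I would recover $\text{res}_{f,0}$ as the leading coefficient of $P_S$ along $\Omega_f = H''/\partial_t^{-1}H''$: writing $\omega_i = pr(s_i)$, part (3) of Proposition 4.1 gives $\text{res}_{f,0}(\omega_i,\omega_j) = \pm\,\delta_{(\mu+1-i)j}$, indexed by the involution $\kappa$. On the cohomology side, the polarization $S$ is transported from $P_S$ through the maps $\psi_\alpha: H^n(X_\infty)_\lambda \to C^\alpha$ of Section 1; on the primitive part of $Gr^W_{n+l}$ it acquires the form $S(u,v) = S_0(u, N^l v)$, where $S_0$ is induced by the leading term of $P_S$ and the Weil operator acts as $i^{p-q}$ on $I^{p,q}$.

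The bridge between the two descriptions is the commutative square of Section 1: under $\Phi$ the operator $N$ corresponds, up to the factor $-2\pi i$ and a power of $\partial_t$, to multiplication by $f$ on $\Omega_f$, so that $S(u, N^l v)$ is carried to $\text{res}_{f,0}(\omega, f^l\eta)$ up to a nonzero constant depending only on $l$. Because $\hat{\Phi}^{p,q}_\lambda = \partial_t^{p-n}\circ\psi_\alpha$, the two exponents $p-n$ and $p'-n$ attached to the two arguments produce, through the connection compatibility and the $\partial_t\mapsto-\partial_t$ symmetry of $P_S$, an overall shift $\partial_t^{p+p'-2n}$ together with a sign governed by their parity. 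Matching this shift against the factor $\partial_t^{-1-n}$ of Proposition 4.1(3) selects the bidegrees that pair nontrivially and isolates a residual sign, which I claim organizes into the operator $\tilde{C}$ of (12).

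The decisive step is to identify this residual sign as precisely $(-1)^p$ on each $J^{p,q}$ and to check that the remaining scalars fuse into a single nonzero $*$. On the primitive part of $Gr^W_{n+l}$ one has $p+q = n+l$, whence the Weil factor rewrites as $i^{p-q} = (-1)^p\, i^{-(n+l)}$; the weight-dependent constant $i^{-(n+l)}$, the powers $(-2\pi i)^l$ from Varchenko's diagram, and the $\pm$ of the good basis are all independent of $p$ inside a fixed primitive block, so they are meant to be absorbed into $*$, leaving exactly the $p$-dependence $(-1)^p$ defining $\tilde{C}$. I expect the main obstacle to be the verification that these block- and weight-dependent scalars genuinely collapse to one global constant rather than a family of distinct ones: this requires controlling the interaction of the conjugation transported to $\Omega_f$ by (9), the half-integer shift hidden in the factor $t^\alpha\exp(\log t\,N/2\pi i)$ of $\psi_\alpha$, and the $(-1)^n$-symmetry of $S$ against the symmetry of $\text{res}_{f,0}$, so that the polarization positivity and the good-basis normalization of Proposition 4.1(3) remain simultaneously compatible across all eigenvalues $\lambda$.
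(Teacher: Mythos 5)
Your proposal takes a genuinely different route from the paper, and the difference matters: the paper's proof is analytic, not lattice-theoretic. It Morsifies $f$ by $f_s=f+\sum_i s_ix_i$, expands the oscillatory integrals over Lefschetz thimbles by stationary phase to obtain $S_{ij}(0,z)=(-1)^{n(n+1)/2}(2\pi i z)^{n+1}\bigl(\mathrm{Res}_f(\phi_i,\phi_j)+O(z)\bigr)$, produces the twist by the change of variable $z\mapsto e^{\pi i/d}z$ for $f$ homogeneous of degree $d$ (giving the factor $(-1)^{(d-1)\deg\phi_j/d}$ on a form of Hodge level $p$), reduces the general germ to the homogeneous case by Scherk's theorem and continuity of the Grothendieck residue, and finally lets $d\to\infty$ to land on $\tilde C|_{J^{p,q}}=(-1)^p$. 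Your plan instead stays inside the Brieskorn-lattice formalism --- good basis, higher residue pairing $P_S$, Varchenko's graded comparison of $N$ with multiplication by $f$ --- which is the Varchenko--M.~Saito--Hertling circle of ideas. That route is legitimate in principle, but as written it stops exactly at the theorem's actual content.

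The genuine gap is your decisive step, and you name it yourself without resolving it. All of your inputs are block-local: Proposition 4.1(3) gives $P_S(s_i,s_j)=\pm\delta_{(\mu+1-i)j}\,\partial_t^{-1-n}$ with a sign depending on $i$ that you do not control; the dictionary between $P_S$ and the flat polarization $S$ on elementary sections carries $\alpha$-dependent scalars (commuting $\partial_t^{p-n}$ past $\psi_\alpha$ multiplies a section of order $\alpha$ by factors of the form $\prod(\alpha+k)$, and the $P_S$-versus-$S$ comparison itself contributes $\Gamma$-type coefficients); and your own identity $i^{p-q}=(-1)^p\,i^{-(n+l)}$ leaves the factor $i^{-(n+l)}$, which changes from one $W$-graded block to the next, as does the $(-2\pi i)^l$ coming from the level forms $S(u,N^lv)$. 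What this machinery yields, absent further computation, is only $S(\Phi^{-1}\omega,\Phi^{-1}\eta)=\mathrm{res}_{f,0}(\omega,D\eta)$ for some invertible operator $D$ diagonal with respect to the bigrading and the spectrum --- strictly weaker than $D=*\,\tilde C$ with a single global $*\ne 0$, which is precisely what the theorem asserts and what the later Riemann--Hodge relations and signature computations require. The paper gets this uniformity for free: the stationary-phase expansion computes one constant, $(-1)^{n(n+1)/2}(2\pi i)^{n+1}$, simultaneously for every pair $(\phi_i,\phi_j)$, and the involution $z\mapsto e^{\pi i/d}z$ acts uniformly on all blocks and eigenvalues before the limit $d\to\infty$ pins the twist to $(-1)^p$. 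A secondary caveat: Varchenko's diagram identifies $N$ with $Gr(f)$ only on $V$- and $F$-graded pieces (commutation up to $-2\pi i$ modulo $F^p$), so transporting $S(u,N^lv)$ to $\mathrm{res}_{f,0}(\omega,f^l\eta)$ at the level of the full, ungraded pairing needs the good-basis splitting everywhere, which entangles the undetermined signs of Proposition 4.1(3) with your sign count rather than settling it.
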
 

\noindent
Part of this proof is given in \cite{CIR} for homogeneous fibrations in the context of mirror symmetry, see also \cite{PH}.

\begin{proof}

\noindent 
Before starting the proof lets mention that the map $\Phi$ is classically used to correspond the mixed Hodge structure on $H^n(X_{\infty},\mathbb{C})$ and $\Omega_f$. We only prove the correspondence on polarizations. 

\vspace{0.3cm}

\noindent
Step 1: Choose a $\mathbb{C}$-basis of the module $\Omega(f)$, namely $\{\phi_1,...,\phi_{\mu}\}$, where $\phi_i=f_i .d\underline{x}$. We identify the class $[e^{-f/t} \phi_i]$ with a cohomology class in $H(X_t)$. We may also choose the basis $\{ \phi_i \}$ so that the forms $\{\eta_i=e^{(-f/t)} \phi_i\}$ correspond to a basis of vanishing cohomology, by the formula

\begin{equation}
\displaystyle{\int_{\Gamma}e^{-\tau f}\omega}=\displaystyle{\int_{0}^{\infty}e^{-t \tau}\int_{\Gamma \cap X_t}\frac{\omega}{df}\mid_{X_t}}
\end{equation}

\vspace{0.3cm}

\noindent
Step 2: In this step, we assume the Poincar\`e product is non-degenerate. By this assumption we may also assume $f$ is homogeneous of degree $d$ and $\phi_i$'s are chosen by homogeneous basis of $\Omega_f$. Consider the deformation

\vspace{0.3cm}
 
\begin{center}
$f_s=f+\displaystyle{\sum_{i=0}^{n}}s_ix_i$ 
\end{center}

\vspace{0.3cm}

\noindent
and set 

\vspace{0.3cm}

\begin{center}
$S_{ij}(s,z):=\langle [e^{-f_s/z} \phi_i], [e^{+f_s/z} \phi_j] \rangle$. 
\end{center}

\vspace{0.3cm}

\noindent
The cup product is the one on the relative cohomology, and we may consider it in the projective space $\mathbb{P}^{n+1}$. The perturbation $f_s$ and also the Saito form $S_{ij}$ are weighted homogeneous. This can be seen by choosing new weights, $\deg(x_i)=1/d, \ \deg(s_i)=1-1/d$, and $\deg(z)=1$ then the invariance of the product with respect to the change of variable $x \to \lambda^{1/d}x, \ z \to \lambda z$, shows that $S_{ij}(s,z)$ is weighted homogeneous. We show that $S_{ij}$ is some multiple of $\widehat{Res}_{f,0}$.

\vspace{0.3cm}

\begin{center}
$S_{ij}(s,z):=(-1)^{n(n+1)/2}(2 \pi iz)^{n+1}(Res_f( \phi_i, \phi_j )+ O(z))$. 
\end{center}

\vspace{0.3cm}

\noindent
Suppose that $s$ is generic so that $ x \to Re( f_s/z) $ is a Morse function. Let $ \Gamma_1^+,...,\Gamma_{\mu}^+$, (resp. $\ \Gamma_1^-,...,\Gamma_{\mu}^-$) denote the Lefschetz thimbles emanating from the critical points $\sigma_1,...,\sigma_{\mu}$ of $Re(f_s/t)$ given by the upward gradient flow (resp. downward). Choose an orientation so that $\Gamma_r^+ . \Gamma_s^-= \delta_{rs}$. We have 

\vspace{0.3cm}

\begin{center}
$S_{ij}(s,z)= \displaystyle{\sum_{r=1}^{\mu}}(\displaystyle{\int_{\Gamma_r^+}}e^{-f_s/z}\phi_i)(\displaystyle{\int_{\Gamma_r^-}}e^{f_s/z}\phi_j)$-
\end{center}

\vspace{0.3cm}

\noindent
For a fixed argument of $z$ we have the stationary phase expansion as $z \to 0$.

\vspace{0.3cm}

\begin{center}
$(\displaystyle{\int_{\Gamma_r^+}}e^{-f_s/z}\phi_i) \cong \pm \frac{(2 \pi z)^{(n+1)/2}}{\sqrt{Hess f_s(\sigma_r)}}(f_i(\sigma_r) + O(z))$
\end{center}

\vspace{0.3cm}

\noindent
where $\phi_i=f_i(x) d{\underline{x}}$. Therefore,

\vspace{0.3cm}

\begin{center}
$S_{ij}(s,z)=(-1)^{n(n+1)/2}(2 \pi i z)^{n+1} \displaystyle{\sum_{r=1}^{\mu}}(\frac{f_i(\sigma_r)f_j(\sigma_r)}{Hess(f_s)(\sigma_r)}+O(z))$
\end{center}

\vspace{0.3cm}

\noindent
where the lowest order term in the right hand side equals the Grothendieck residue. As this holds for an arbitrary argument of $z$, and $S_{ij}$ is holomorphic for $z \in \mathbb{C}^*$; the conclusion follows for generic $s$. By analytic continuation the same holds for all $s$. By homogenity we get,

\begin{equation}
S_{ij}(0,z)=(-1)^{n(n+1)/2}(2 \pi iz)^{n+1}Res_f( \phi_i, \phi_j).
\end{equation}

\vspace{0.3cm}

\noindent
Note that there appears a sign according to the orientations chosen for the integrals; however this only modifies the constant in the theorem. Thus, we have;
\begin{equation}
S_{ij}(0,1)=(-1)^{n(n+1)/2}(2 \pi i)^{n+1}Res_f( \phi_i, \phi_j).
\end{equation}

\vspace{0.3cm}

\noindent
Step 3: The sign appearing in residue pairing is caused by compairing the 
two products 
\begin{equation}
(e^{-f} \phi_i,e^{-f} \phi_j), \qquad (e^{-f} \phi_i,e^{+f} \phi_j).
\end{equation}

\vspace{0.3cm}

\noindent
Assume we embed the fibration in a projective one as before, replacing $f$ with a homogeneous polynomial germ of degree $d$. We can consider a change of variable as $I:z \to e^{\pi i/d}z$ which changes $f$ by $-f$. Thus this map is an involution on the value of $f$. The cohomology bases in $Gr_F^pGr_{n+1}^W H^n \subset I^{p,q}$ can be charcterized by the degree of forms, i.e. by the  $\phi_i=f_i dz$, with $f_i$ homogeneous, and $ p \leq deg(\phi_i)=l(\phi_i) < p+1$. It follows that the cohomology class $e^{-f} \phi_j$ after the this change of variable is replaced by $c_p. e^{+f} \phi_j$ where $c_p \in \mathbb{C}$ only depends to the Hodge filtration (defined by degree of forms). By the change of variable we obtain;

\begin{equation}
(e^{-f} \phi_i,I^*e^{-f} \phi_j)=(e^{-f} \phi_i, (-1)^{\deg \phi_j/d} e^{+f} \phi_j) 
\end{equation}

\vspace{0.3cm}

\noindent
because $I^d=id$, if we iterate $I^*$, $d$ times we obtain;

\[ (e^{-f} \phi_i,e^{-f} \phi_j)= res_{f,0}(a\ ,(-1)^{(d-1)\deg \phi_j/d}.b) 
 \]

\vspace{0.3cm}
 
\noindent 
The Riemann-Hodge bilinear relations in $H_{\ne 1}$ implies that, the products of the forms under consideration is non-zero except when the degrees of $\phi_i$ and $\phi_j$ sum to $n$. This explains the formula in $H_{\ne 1}$. The above argument will still hold when the form is replaced by $(\bullet, N_Y \bullet)$, by the linearity of $N_Y$. Thus, we still have the same result on $H_{\ne 1}$.

\vspace{0.3cm}

\noindent
Step 4: In case the Poincar\`e product is degenerate, we still assume $f$ is homogeneous but we change the cup product by applying $N_Y$ on one component. The same relation can be proved between the level form $(\bullet, N_Y \bullet)$, and the corresponded local residue, i.e.

\[ S_Y(\Phi^{-1}(\omega),\Phi^{-1}(\eta)) =*.\ \widehat{Res}(a\ ,\tilde{C}.b) , \qquad *\ne 0, \ a,b \in \Omega_f \]

\vspace{0.3cm}

\noindent
implies   

\[ S_Y(\Phi^{-1}(\omega),N_Y. \Phi^{-1}(\eta)) =*.\ \widehat{Res}(a\ ,\mathfrak{f}.\tilde{C}.b) , \qquad * \ne 0 , \ \ a,b \in \Omega_f \]

\vspace{0.3cm}

\noindent
where $\mathfrak{f}$ is the nilpotent transformation corresponded to $N_Y$ via $\Phi$.

\vspace{0.3cm}

\noindent
Step 5: By a standard theorem of J. Scherk, \cite{V} page 38, \cite{SC2} , (see also the corollary in \cite{V} sec. 3 page 37), continuity of Grothendieck residue, \cite{G3} page 657, after embedding of the Milnor fibration of $f$ into that of $f_Y$, the Grothendieck pairing for $f_Y$ is the prolongation of that of $f$. 

\vspace{0.3cm}

\noindent
Step 6: Until now we have proved the relation 

\begin{equation}
S(\Phi^{-1}(\omega),\Phi^{-1}(\eta))= * \times \ \text{res}_{f_Y,0}(\omega,(-1)^{p(d-1)/d}.\eta), \qquad 0 \ne * \in \mathbb{C}
\end{equation}

\noindent
For some $d$ and moreover, $d$ can be as large as we like. Because the left hand side is independent of $d$, if we let $d \to \infty$
then by Step 5 we obtain (13).

\end{proof}

\begin{example}
According to the above description the isomorphism $\Phi$ is as follows,

\[
\Phi^{-1}: [z^i dz] \longmapsto c_i. [res_{f=1}( z^i dz/(f-1)^{[l(i)]}) ]
\]

\vspace{0.3cm}

\noindent
with $c_i \in \mathbb{C}$, and $z^i$ in the basis mentioned above (see \cite{CIR}, Appendix A). For instance by taking $f=x^3+y^4$, then as basis for Jacobi ring, we choose 

\vspace{0.3cm}

\begin{center}
$z^i:\ 1, \ y, \ x, \ y^2, \ xy, \ xy^2$ 
\end{center}

\vspace{0.3cm}

\noindent
which correspond to top forms with degrees 

\vspace{0.3cm}

\begin{center}
$l(i): \ 7/12, \ 10/12, \ 11/12, \ 13/12, \ 14/12,\ 17/12$
\end{center} 

\vspace{0.3cm}

\noindent
respectively. The above basis projects onto a basis

\vspace{0.3cm}
 
\begin{center}
$\displaystyle{\bigoplus_{-1 < \alpha=l(i) -1 < n}Gr_{\alpha}^VH'' \twoheadrightarrow Gr_V \Omega_f}$ 
\end{center}

\vspace{0.3cm}

\noindent
as in Theorem 5.1. The Hodge filtration is explained as follows. First we have $h^{1,0}=h^{0,1}=3$. Therefore, because $\Phi$ is an isomorphism. 

\[ <1 .\omega,\ y. \omega, \ x .\omega > =\Omega_f^{0,1}, \qquad <y^2. \omega, \ xy.\omega  , \ xy^2.\omega > =\Omega_f^{1,0} \]

\vspace{0.3cm}

\noindent
where $\omega =dx \wedge dy$, and the Hodge structure is pure, because $Gr_2^WH^n(X_{\infty})=0$.

\vspace{0.3cm}

\begin{center}
$\overline{<1. dx \wedge dy, \ y. dx \wedge dy, \ x. dx \wedge dy>}= $ \\ $<c_1. xy^2 .dx \wedge dy, \ xy .dx \wedge dy , \ y^2 .dx \wedge dy> $ 
\end{center}

\vspace{0.3cm}

\end{example}

\vspace{0.2cm}

\begin{theorem} 

\noindent
Assume $f:\mathbb{C}^{n+1} \to \mathbb{C}$ is a holomorphic isolated singularity germ. The modified Grothendieck residue provides a polarization for the extended fiber $\Omega_f$, via the aforementioned isomorphism $\Phi$. Moreover, there exists a unique set of forms $\{\widehat{Res}_k\}$ polarizing the primitive subspaces of $Gr_k^W\Omega_f$ providing a graded polarization for $\Omega_f$.

\end{theorem}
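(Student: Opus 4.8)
The strategy is to deduce both assertions from Theorem 5.1 together with the fact, recorded in \cite{H1}, that the Steenbrink data $(H^n(X_{\infty},\mathbb{C}),W,F,N,S)$ is a polarized mixed Hodge structure: $N$ is an infinitesimal isometry of $S$, the filtration $W$ is the monodromy weight filtration of $N$ up to the standard shift (by $n$, replaced by $n+1$ on the unipotent part $\lambda=1$), and for each $k\geq 0$ the form $S(\bullet,N^{k}\bullet)$ restricts to a polarization of the pure Hodge structure on the primitive part $P_{n+k}\subset Gr_{n+k}^W$. First I would transport this entire package along the linear isomorphism $\Phi$ of Section 4. By construction $W_\bullet\Omega_f=\Phi W_\bullet H^n(X_{\infty})$ and $F^\bullet\Omega_f=\Phi F^\bullet H^n(X_{\infty})$, and by the theorem of Varchenko recalled in Section 1 the nilpotent $N$ is carried to the operator $\mathfrak{f}$ (the transported $\log M_u$, realized through $Gr(f)$), so that $\Phi$ is an isomorphism of mixed Hodge structures equipped with a nilpotent endomorphism.

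Next I would identify the transported polarization. By Theorem 5.1 one has $S(\Phi^{-1}\omega,\Phi^{-1}\eta)=*\cdot\widehat{Res}_{f,0}(\omega,\eta)$ for a nonzero constant, where $\widehat{Res}_{f,0}=\text{res}_{f,0}(\bullet,\tilde{C}\,\bullet)$ and $\tilde{C}|_{J^{p,q}}=(-1)^p$. Hence $\widehat{Res}_{f,0}$ is, up to scalar, exactly the pull-back of $S$ under $\Phi$. Since every axiom of a polarized mixed Hodge structure is preserved by an isomorphism of the underlying bifiltered space intertwining the nilpotent endomorphism and the bilinear form, the tuple $(\Omega_f,W,F,\mathfrak{f},\widehat{Res}_{f,0})$ inherits the polarization axioms: $\widehat{Res}_{f,0}$ carries the same $(\pm)$-symmetry as $S$, the operator $\mathfrak{f}$ satisfies $\widehat{Res}_{f,0}(\mathfrak{f}x,y)+\widehat{Res}_{f,0}(x,\mathfrak{f}y)=0$, and $W$ is the weight filtration of $\mathfrak{f}$. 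This proves the first assertion.

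For the graded polarizations I would use the recipe built into the notion of a polarized mixed Hodge structure. On $Gr_k^W\Omega_f$ let $P_k$ denote the primitive subspace for $\mathfrak{f}$, and set
\[ \widehat{Res}_k(x,y):=\widehat{Res}_{f,0}(x,\mathfrak{f}^{\,k-n}y), \qquad x,y\in P_k \]
(with the exponent adjusted to $k-n-1$ on the unipotent part). Because $\Phi$ carries $P_k$ and $\mathfrak{f}^{\,k-n}$ to the corresponding primitive space and power of $N$ on $H^n(X_{\infty})$, the form $\widehat{Res}_k$ is the transport of $S(\bullet,N^{k-n}\bullet)$, which polarizes the primitive pure Hodge structure of weight $k$; hence $\widehat{Res}_k$ polarizes $P_k$ and the collection $\{\widehat{Res}_k\}$ is a graded polarization of $\Omega_f$. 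Uniqueness is immediate: once $\widehat{Res}_{f,0}$ and the nilpotent $\mathfrak{f}$ are fixed, the primitive decomposition of each $Gr_k^W$ and the powers $\mathfrak{f}^{\,k-n}$ are canonically determined, so the displayed formula leaves no freedom.

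The main obstacle is the positivity half of the polarization axiom. The constant $*$ produced by Theorem 5.1 is a priori complex, assembled from the factor $(-1)^{n(n+1)/2}(2\pi i)^{n+1}$ of Step 2 and the twist $\tilde{C}|_{J^{p,q}}=(-1)^p$, so I must verify that on each primitive piece the sign of the induced Hermitian form $i^{p-q}\widehat{Res}_k(v,\overline{v})$ is the correct one. I expect to settle this exactly as in the signature computation announced in the introduction: track the phase of $(2\pi i)^{n+1}$ against $(-1)^p$ weight by weight, absorbing the residual real scalar into the normalization of each $\widehat{Res}_k$ separately, which is precisely why a single form need not suffice and a weight-indexed family is the natural output.
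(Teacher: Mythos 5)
Your proposal follows essentially the same route as the paper's proof: both transport the graded-polarized structure of $(H^n(X_{\infty}),W,F,N,S)$ through $\Phi$ using Theorem 5.1, both use the Lefschetz decomposition $Gr_l^W\Omega_f=\bigoplus_r \mathfrak{f}^r P'_{l-2r}$ with $P'_l=\ker(\mathfrak{f}^{l+1})=\Phi^{-1}P_l$, and both define the level forms as the transport of $S_l(u,v)=S(u,N^l v)$, namely $\widehat{Res}_l(u,v)=\widehat{Res}(u,\mathfrak{f}^l v)$ (your exponent $k-n$ versus the paper's $l$ is only the choice of centering the weight filtration at $n$ rather than at $0$, and your identification of $\mathfrak{f}$ with the transported $N$ via Varchenko's theorem matches the paper's usage). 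Your concern about the complex constant $*$ is also resolved exactly as the paper resolves it: the positivity statement is only asserted up to a nonzero constant, $\mathrm{Const}\times\widehat{Res}_l(C_l x,\mathfrak{f}^l\bar{x})>0$, so no phase bookkeeping beyond Theorem 5.1 is actually carried out.

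The one genuine divergence, and the weak point of your write-up, is the uniqueness claim. You argue that the formula $\widehat{Res}_k=\widehat{Res}(\bullet,\mathfrak{f}^{k-n}\bullet)$ ``leaves no freedom,'' but that only shows your particular construction is canonical; it does not address uniqueness among all forms polarizing the primitive subspaces, and read literally that stronger statement fails, since rescaling a polarization by a positive real on a primitive piece yields another polarization. The paper instead derives uniqueness from the Mixed Hodge Metric theorem: there is a unique Hermitian form $\mathcal{R}$ with $i^{p-q}\mathcal{R}(v,\bar{v})>0$ on each $J^{p,q}$, for which the Deligne decomposition $\Omega_f=\bigoplus_{p,q}J^{p,q}$ is orthogonal (the conjugation being the one transported by $\Phi$ in Section 4), and the uniqueness of the family $\{\widehat{Res}_l\}$ is read off from the uniqueness of this normalized metric. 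So to obtain the ``unique set of forms'' assertion as stated, you need a normalization of this kind tied to the Hodge metric, not merely the canonicity of your recipe; otherwise your argument establishes existence of the graded polarization but a strictly weaker form of the uniqueness clause.
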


\begin{proof}
Because $H^n(X_{\infty})$ is graded polarized, hence using theorem 5.1 $\Omega_f$ is also graded polarized via the isomorphism $\Phi$. By the Mixed Hodge Metric theorem, the Deligne-Hodge decomposition; 

\begin{equation}
\Omega_f=\displaystyle{\bigoplus_{p,q}}J^{p,q} 
\end{equation}

\vspace{0.3cm}

\noindent
is graded polarized and there exists a unique Hermitian form; $\mathcal{R}$ with,

\begin{equation}
i^{p-q}\mathcal{R}(v,\bar{v})>0, \qquad v \in J^{p,q}
\end{equation}

\vspace{0.3cm}

\noindent
and the decomposition is orthogonal with respect to $\mathcal{R}$. Here the conjugation is that in (9). This shows that the polarization forms $\{\widehat{Res}_l\}$ are unique. 

\noindent
Let $N:=\log M_u$ be the logarithm of the unipotent part of the monodromy for the Milnor fibration defined by $f$.  We have  

\[ H^n(X_{\infty})=\bigoplus_r N^rP_{l-2r}, \qquad P_l:=\ker N^{l+1}:Gr_l^W H^n \to Gr_{-l-2}^WH^n \]

\vspace{0.3cm}

\noindent
and the level forms 

\[ S_l:P_l \otimes P_l \to \mathbb{C}, \qquad S_l(u,v):=S(u,N^lv) \]

\vspace{0.3cm}

\noindent
polarize the primitive subspaces $P_l$. By using the isomorphism $\Phi$, similar type of decomposition exists for $\Omega_f$. That is the isomorphic image $P_l^{\prime}:=\Phi^{-1}P_l$ satisfies

\[ Gr_l^W\Omega_f=\bigoplus_r N^rP_{l-2r}^{\prime}, \qquad P_l^{\prime}:=\ker \mathfrak{f}^{l+1}:Gr_l^W \Omega_f \to Gr_{-l-2}^W \Omega_f \]

\vspace{0.3cm}

\noindent
and the level forms 

\[ \widehat{Res}_l:P_l^{\prime} \otimes P_l^{\prime} \to \mathbb{C}, \qquad \widehat{Res}_l:=\widehat{Res}(u,\mathfrak{f}^lv) \]

\vspace{0.3cm}

\noindent
polarize the primitive subspaces $P_l^{\prime}$, where $\mathfrak{f}$ is the map induced from multiplication by $f$ on $Gr_l^W \Omega_f$. Specifically, this shows

\vspace{0.3cm}

\begin{itemize}
\item  $\widehat{Res}_l(x,y)=0, \qquad x \in P_r^{\prime}, y \in P_s^{\prime}, r \ne s$
\item  $ Const \times \widehat{Res}_l(C_lx,\mathfrak{f}^l\bar{x}) >0, \qquad 0 \ne x \in P_l^{\prime}$

\vspace{0.2cm}

\noindent
where $C_l$ is the corresponding Weil operator.
\end{itemize}

\vspace{0.3cm}

\end{proof}

\vspace{0.2cm}

\noindent
Using this corollary and summing up all the material in sections 4 and 5 , we can give the following picture for the extension of PVMHS associated to isolated hypersurface singularity.

\vspace{0.2cm}

\begin{theorem}
Assume $f:\mathbb{C}^{n+1} \to \mathbb{C}$ is a holomorphic hypersurface germ with isolated singularity at $0 \in \mathbb{C}^{n+1}$. Then the variation of mixed Hodge structure, cf. \cite{H1}. This VMHS can be extended to the puncture with the extended fiber isomorphic to $\Omega_f$ in the sense of sec. 4 and 5, and it is polarized by 5.1 and 5.4. The Hodge filtration on the new fiber $\Omega_f$ corresponds to an opposite Hodge filtration on $H^n(X_{\infty},\mathbb{C})$ in the way explained in 4.2.
\end{theorem}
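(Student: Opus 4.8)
The statement is a synthesis of the constructions in Sections 3--5, so the plan is to assemble the three assertions---extension, polarization, and opposite filtration---from the results already established, and then to verify that they are mutually compatible. First I would recall the extension of the Gauss--Manin system from Section 3: the gluing of the $V$-lattices with the Brieskorn lattice $\mathcal{H}^{(0)}$ inside $\mathcal{G}$ is carried out through the isomorphisms $Gr_F^{n-p}(H_\lambda) \cong Gr_{\alpha+p}^V(\mathcal{H}^{(0)}/\tau^{-1}\mathcal{H}^{(0)})$ with $\lambda=\exp(2\pi i\alpha)$, together with the canonical identity (7), namely $\mathcal{H}^{(0)}/\tau^{-1}\mathcal{H}^{(0)} = \Omega^{n+1}/(df\wedge\Omega^n) = \Omega_f$. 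This identifies the extended fiber over $0$ with $\Omega_f$ and produces the $V$-filtration $V^\alpha\Omega_f = pr(V^\alpha\cap H'')$ of (3).

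Next I would transport the Steenbrink limit mixed Hodge structure along the linear isomorphism $\Phi: H^n(X_\infty,\mathbb{C}) \to \Omega_f$ built in Section 4, setting $W_k(\Omega_f)=\Phi W_k H^n(X_\infty,\mathbb{Q})$ and $F^p(\Omega_f)=\Phi F^p H^n(X_\infty,\mathbb{C})$, thereby equipping $\Omega_f$ with a genuine mixed Hodge structure whose graded pieces match the gluing isomorphism by the very construction of $\Phi$. For the polarization I would then invoke Theorem 5.1, which shows that $\Phi$ intertwines, up to a nonzero constant, the polarization $S$ on the vanishing cohomology with the modified Grothendieck residue $\widehat{Res}_{f,0}=\text{res}_{f,0}(\bullet,\tilde C\,\bullet)$, where $\tilde C$ acts by $(-1)^p$ on $J^{p,q}=\Phi^{-1}I^{p,q}$. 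Theorem 5.4 upgrades this to a genuine graded polarization: since $H^n(X_\infty,\mathbb{C})$ is graded polarized and $\Phi$ respects $W$ and $F$, the modified residue descends to the primitive subspaces $P_l'=\Phi^{-1}P_l$ and the level forms $\widehat{Res}_l(u,v)=\widehat{Res}(u,\mathfrak{f}^l v)$ polarize them, $\mathfrak{f}$ being the image under $\Phi$ of multiplication by $f$. Here the Varchenko-type identification of $N=\log M_u$ with $Gr(f)$ from Section 1 is what guarantees that $\mathfrak{f}$ plays the role of the logarithm of monodromy on $\Omega_f$, so that the level forms are the correct ones.

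For the opposite-filtration assertion I would invoke Proposition 4.2: the filtration $U^pC^\alpha=C^\alpha\cap V^{\alpha+p}H''$ is opposite to the Hodge filtration $F$ on $\mathcal{G}$, and the resulting $U'_p=\psi^{-1}\{\oplus_\alpha Gr_V^\alpha[V^{\alpha+n-p}H'']\}$ satisfies $F^pH^n(X_\infty,\mathbb{C})_\lambda\cong U'_p H^n(X_\infty,\mathbb{C})$; transporting through $\Phi$ then exhibits the Hodge filtration on $\Omega_f$ as the opposite filtration claimed. The main obstacle I anticipate is not any single step but the compatibility thread running through all of them: one must check that the mixed Hodge structure transported via $\Phi$ coincides with the limit structure produced by the gluing of lattices, and---most delicately---that the polarization $S$ on the nearby fibers degenerates precisely to $\widehat{Res}_{f,0}$ with the sign twist $\tilde C$, rather than to the naive Grothendieck residue. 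This sign, traced in Steps 3--6 of the proof of Theorem 5.1 through the involution $I: z \mapsto e^{\pi i/d}z$ and the interplay of the higher residue pairing with the linking form, is exactly the nontrivial content that makes the two pairings match, and confirming its consistency with the extension is what remains to be secured before the statement is complete.
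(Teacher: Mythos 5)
Your proposal coincides with the paper's own treatment: the theorem is stated there as a summary with no separate proof, being exactly the assembly you perform of the gluing identity $\mathcal{H}^{(0)}/\tau^{-1}\mathcal{H}^{(0)} \cong \Omega_f$ from Section 3, the transport of the Steenbrink MHS along $\Phi$ from Section 4, the polarization correspondence of Theorems 5.1 and 5.4, and the opposite-filtration statement of Proposition 4.2. The compatibility concern you raise about the sign twist $\tilde C$ is precisely what the paper delegates to Steps 3--6 of the proof of Theorem 5.1, so your account is complete and faithful to the paper's route.
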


\vspace{0.2cm}

The Riemann-Hodge bilinear relations for the MHS on $\Omega_f$ and its polarization $\widehat{Res}$ can be deduced from that of $(H^n(X_{\infty}),S)$. 

\vspace{0.2cm}

\begin{corollary} (Riemann-Hodge bilinear relations for modified Grothendieck)
Assume $f:\mathbb{C}^{n+1} \to \mathbb{C}$ is a holomorphic germ with isolated singularity. Suppose $\mathfrak{f}$ is the corresponding map to $N$ on $H^n(X_{\infty})$, via the isomorphism $\Phi$. Define 

\[ P_l=PGr_l^W:=\ker(\mathfrak{f}^{l+1}:Gr_l^W\Omega_f \to Gr_{-l-2}^W\Omega_f) \]

\vspace{0.3cm}

\noindent
Going to $W$-graded pieces;
\begin{equation}
\widehat{Res}_l: PGr_l^W \Omega_f \otimes_{\mathbb{C}} PGr_l^W \Omega_f \to \mathbb{C}
\end{equation}

\vspace{0.3cm}

\noindent
is non-degenerate and according to Lefschetz decomposition 

\[ Gr_l^W\Omega_f=\bigoplus_r \mathfrak{f}^r P_{l-2r} \]

\vspace{0.3cm}

\noindent
we will obtain a set of non-degenerate bilinear forms,

\begin{equation}
\widehat{Res}_l \circ (id \otimes \mathfrak{f}^l): P Gr_l^W \Omega_f  \otimes_{\mathbb{C}} P Gr_l^W \Omega_f  \to \mathbb{C}, 
\end{equation} 

\begin{equation}
\widehat{Res}_l=res_{f,0}\ (id \otimes \tilde{C} .\  \mathfrak{f}^l)
\end{equation}

\vspace{0.3cm}

\noindent
where $\tilde{C}$ is as in 5.1, such that the corresponding Hermitian form associated to these bilinear forms is positive definite. In other words, 

\vspace{0.3cm}

\begin{itemize}

\item $\widehat{Res}_l(x,y)=0, \qquad x \in P_r, \ y  \in P_s, \ r \ne s $

\item If $x \ne 0$ in $P_l$, 

\[ Const \times res_{f,0}\ (C_lx,\tilde{C} .\  \mathfrak{f}^l .\bar{x})>0  \]

\vspace{0.3cm}

\noindent
where $C_l$ is the corresponding Weil operator. 

\end{itemize}

\vspace{0.3cm}

\end{corollary}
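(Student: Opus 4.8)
The plan is to obtain the statement by transporting the classical Riemann--Hodge bilinear relations for the polarized mixed Hodge structure $(H^n(X_\infty),W_\bullet,F^\bullet,S,N)$ through the isomorphism $\Phi$ and then replacing $S$ by the modified Grothendieck residue by means of Theorem 5.1. First I would recall that, by the work of Schmid and Steenbrink (cf. \cite{H1}), the datum $(H^n(X_\infty),W_\bullet,F^\bullet,S,N)$ is a polarized mixed Hodge structure of weight $n$. In particular $N$ is an infinitesimal isometry of $S$, the filtration $W_\bullet$ is the monodromy weight filtration attached to $N$, and the Lefschetz decomposition $Gr_l^W H^n(X_\infty)=\bigoplus_r N^r P_{l-2r}$ holds with $P_l=\ker(N^{l+1}:Gr_l^WH^n\to Gr_{-l-2}^WH^n)$. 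The primitive forms $S_l(u,v):=S(u,N^l v)$ are then nondegenerate on $P_l$, annihilate distinct primitive components, and the associated Hermitian form $u\mapsto S_l(Cu,\bar u)$ is positive definite, where $C$ is the Weil operator of the pure Hodge structure carried by $P_l$.

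Next I would push this structure onto $\Omega_f$. By the construction of Section 4 the map $\Phi$ is a $\mathbb{C}$-linear isomorphism satisfying $W_k(\Omega_f)=\Phi W_kH^n(X_\infty)$ and $F^p(\Omega_f)=\Phi F^pH^n(X_\infty)$, and it intertwines the two conjugations via the map defined there; moreover, by Varchenko's theorem recalled in Section 1, $\Phi$ carries $N$ to multiplication $\mathfrak{f}$, that is $\mathfrak{f}=\Phi N\Phi^{-1}$. Consequently the Lefschetz decomposition transports verbatim, giving $P_l=\ker(\mathfrak{f}^{l+1}:Gr_l^W\Omega_f\to Gr_{-l-2}^W\Omega_f)$ and $Gr_l^W\Omega_f=\bigoplus_r\mathfrak{f}^r P_{l-2r}$, while the image $C_l:=\Phi C\Phi^{-1}$ is the Weil operator on the primitive pieces of $\Omega_f$.

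I would then invoke Theorem 5.1: since $S(\Phi^{-1}\omega,\Phi^{-1}\eta)=*\cdot\widehat{Res}_{f,0}(\omega,\eta)$ with $*\ne 0$ and $\widehat{Res}_{f,0}(\omega,\eta)=\mathrm{res}_{f,0}(\omega,\tilde C\eta)$, substituting $\omega=\Phi u$, $\eta=\Phi N^l v$ and using $\Phi N^l=\mathfrak{f}^l\Phi$ gives $S_l(u,v)=*\cdot\widehat{Res}_{f,0}(\Phi u,\mathfrak{f}^l\Phi v)=*\cdot\mathrm{res}_{f,0}(\Phi u,\tilde C\mathfrak{f}^l\Phi v)$. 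Because $*$ is a fixed nonzero constant, the nondegeneracy of $\widehat{Res}_l$ on each $PGr_l^W\Omega_f$ and the orthogonality $\widehat{Res}_l(P_r,P_s)=0$ for $r\ne s$ follow at once from the corresponding properties of $S_l$. Finally, writing $x=\Phi u$ and using $\Phi\bar u=\overline{\Phi u}$ together with $\Phi C=C_l\Phi$, the positivity $S_l(Cu,\bar u)>0$ translates into $Const\cdot\mathrm{res}_{f,0}(C_l x,\tilde C\mathfrak{f}^l\bar x)>0$ for $0\ne x\in P_l$, which is precisely the asserted relation.

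The main obstacle I expect is not the transport itself but the positivity normalization: the constant $*$ furnished by Theorem 5.1 is a priori merely a nonzero complex number, so passing from ``nondegenerate up to a constant'' to a genuinely positive-definite Hermitian form requires controlling its phase together with the sign $\tilde C=(-1)^p$ and the anticommutation $\tilde C\mathfrak{f}=-\mathfrak{f}\tilde C$. This is where I would lean on the polarization Theorem 5.4, which already guarantees that $\widehat{Res}$ polarizes $\Omega_f$; it pins down the phase of $*$ on each graded piece and ensures that, after it is absorbed into the factor $Const$, the form $x\mapsto\mathrm{res}_{f,0}(C_l x,\tilde C\mathfrak{f}^l\bar x)$ is positive definite on each $P_l$. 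The remaining task is the routine verification that $\tilde C$ is compatible with $C_l$ under the bigrading $J^{p,q}$, so that $\tilde C\mathfrak{f}^l$ faithfully reproduces the Weil-weighted operator $N^l$ appearing on the $H^n(X_\infty)$ side.
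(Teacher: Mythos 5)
Your proposal is correct and follows essentially the same route as the paper, whose entire proof is the one-line remark that the corollary ``follows directly from 5.1 and 5.4'': you transport the classical bilinear relations through $\Phi$, use Theorem 5.1 to trade $S$ for $\mathrm{res}_{f,0}(\cdot,\tilde{C}\cdot)$, and invoke Theorem 5.4 to fix the positivity normalization of the constant. Your explicit attention to the phase of the nonzero constant $*$ and to the intertwining $\mathfrak{f}=\Phi N\Phi^{-1}$ merely spells out details the paper leaves implicit.
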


\begin{proof}
This follows directly from 5.1 and 5.4.
\end{proof}

\vspace{0.2cm}

\begin{example}
According to Example 5.2, if $f=x^3+y^4$ for instance
$\widehat{Res}(x.\omega,xy.\omega) =0$, and

\begin{center} 
$* \times \widehat{Res}(1.\omega,\overline{1.\omega}) >0$\\[0.2cm]
$* \times \widehat{Res}(x.\omega,\overline{x.\omega}) >0$\\[0.2cm]
$* \times \widehat{Res}(y.\omega,\overline{y.\omega}) >0$
\end{center}

\noindent
simultaneously for one $* \in \mathbb{C}$. 

\end{example}

\vspace{0.2cm}

The conjugation on the elementary sections of the Deligne  extension is done by the following formulas

\vspace{0.3cm}

\begin{center} 
$\psi_{\alpha}^{-1} (\overline{t^{\alpha}(\log t)^l A_{\alpha,l}})=\begin{cases} \psi_{1-\alpha}^{-1}( t^{1-\alpha}(\log t)^{\nu-l} \bar{A}_{1-\alpha,\nu-l}) , \qquad \alpha \in ]0,1[ \\
\psi_{0}^{-1}((\log t)^l \bar{A}_{0,\nu-l}), \ \  \qquad \qquad \qquad \alpha=0 \end{cases}$
\end{center}

\vspace{0.3cm}

\noindent 
where $\nu $ is the size of the corresponding Jordan block. Regarding the map $\Phi$ defined in Sec. 4, the conjugation on $\Omega_f$ must should satisfy similar relations. That is the conjugate of an element in $Gr_V^{\alpha}Gr_l^W\Omega_f$ is either in $Gr_V^{1-\alpha}Gr_{\nu-l}^W\Omega_f$ or $Gr_V^{0}Gr_{\nu-l}^W\Omega_f$, in respective cases, such that the corresponding sections of vanishing cohomology satisfy the above.

\vspace{0.2cm}

Consider the map 

\[ F:\Omega_X^{n+1} \to i_* \bigcup_z Hom \big (H_n(X,f^{-1}(\eta.\dfrac{z}{|z|}),\mathbb{Z}) \cong \oplus_i \mathbb{Z}\Gamma_i, \mathbb{C} \big ), \qquad \mathcal{H}:=Im(F)\] 

\[ \omega \mapsto [z \to (\Gamma_i \to \int_{{\Gamma}_i}e^{-t/z}\omega)],  \]

\vspace{0.3cm}

\noindent
where $\Gamma_i$ are the classes of Lefschetz thimbles. The vector bundle $\mathcal{H}$ is exactly the Fourier-Laplace transform of the cohomology bundle $R^nf_*\mathbb{C}_{S^*}=\cup_tH^n(X_t,\mathbb{C})$, equipped with a connection with poles of order at most two at $\infty$.

\[ (\cup_tH^n(X_t,\mathbb{C}), \nabla) \leftrightarrows (\mathcal{H}, \nabla^{\prime}) \]

\vspace{0.3cm}

\noindent
The modified Grothendieck residue

\[ \widehat{Res}_{f,0}=res_{f,0}(\bullet,\tilde{C}\bullet) \]

\vspace{0.3cm}

\noindent
with $\tilde{C}$ as in 6.1, is the Fourier-Laplace transform of the polarization $S$ on $H^n(X_{\infty},\mathbb{C})$ (\cite{DW} page 53, 54, prop. 2.6 - \cite{SA7} sec. 5).

\vspace{0.5cm}

\section{Asymptotic Hodge theory} 

\vspace{0.5cm}

A variation of graded polarized $\mathbb{C}$-mixed Hodge structure  is defined analogously having horizontality for $F$, and a collection of $(Gr_k^{\mathcal{W}},\mathcal{F}Gr_k^{\mathcal{W}},Q_k)$ of pure polarized $\mathbb{C}$-Hodge structures. One of the important facts in asymptotic Hodge theory and also Mirror symmetry is the following.

\vspace{0.2cm}

\begin{theorem} (P. Deligne)
Let $\mathcal{V} \to \triangle^{*n}$ be a variation of pure polarized Hodge structure of weight $k$, for which the associated limiting mixed Hodge structure is Hodge-Tate. Then the Hodge filtration $\mathcal{F}$ pairs with the shifted monodromy weight filtration $\mathcal{W}[-k]$, of $\mathcal{V}$, to define a Hodge-Tate variation over a neighborhood of $0$ in $\triangle^{*n}$. 
\end{theorem}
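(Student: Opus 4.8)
The plan is to reduce the assertion to the asymptotic data encoded by Schmid's nilpotent orbit theorem (and, for $n>1$, its several-variable refinement due to Cattani--Kaplan--Schmid) and then to exploit the fact that the Hodge--Tate condition is \emph{open} and is invariant under the monodromy untwisting. Write $N_1,\dots,N_n$ for the commuting logarithms of the unipotent parts of the local monodromies, set $N=N_1+\cdots+N_n$, and let $\mathcal{W}=M(N)[-k]$ be the recentred monodromy weight filtration. Since each $N_j$ is a $(-1,-1)$-morphism of the limiting structure it shifts $M(N)$ by $-2$, so each unipotent monodromy $\exp(2\pi i N_j)$ preserves $\mathcal{W}$; hence $\mathcal{W}$ is a filtration of $\mathcal{V}$ by flat subbundles. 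The limiting mixed Hodge structure is the pair $(F_\infty,\mathcal{W})$, and the Hodge--Tate hypothesis says precisely that $F_\infty$ and $\mathcal{W}$ are opposite, i.e.\ $V=F_\infty^{p}\oplus \mathcal{W}_{2p-1}$ for all $p$, in the sense recalled for mixed Tate structures in Section 4.

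First I would untwist the Hodge bundle. In multivalued coordinates $z_j=\tfrac{1}{2\pi i}\log t_j$ put $\psi(z)=\exp\bigl(-\sum_j z_jN_j\bigr)$. By the nilpotent orbit theorem the twisted flag $\widetilde{\mathcal{F}}(t):=\psi(z)\,\mathcal{F}(t)$ is single-valued in $t$, descends to the canonical fibre, and extends holomorphically across the origin with $\widetilde{\mathcal{F}}(0)=F_\infty$. The decisive point is that, because each $N_j$ shifts $\mathcal{W}$ by $-2$, the automorphism $\psi(z)$ preserves $\mathcal{W}$; applying $\psi(z)$ to a staggered direct-sum decomposition then gives $\psi(z)\bigl(\mathcal{F}^{p}\oplus\mathcal{W}_{2p-1}\bigr)=\widetilde{\mathcal{F}}^{p}\oplus\mathcal{W}_{2p-1}$. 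Consequently $(\mathcal{F}(t),\mathcal{W})$ is opposite at a point $t$ if and only if the \emph{holomorphically extended} flag $(\widetilde{\mathcal{F}}(t),\mathcal{W})$ is opposite, the latter being tested against a fixed flat filtration.

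Next I would invoke openness. The locus of flags $F$ satisfying $V=F^{p}\oplus \mathcal{W}_{2p-1}$ for every $p$ is Zariski open in the relevant flag variety, since it is cut out by the non-vanishing of finitely many minors. By hypothesis this holds at $t=0$, where $\widetilde{\mathcal{F}}(0)=F_\infty$ is opposite to $\mathcal{W}$. Because $t\mapsto\widetilde{\mathcal{F}}(t)$ is holomorphic up to and including $0$, the condition persists on a full neighbourhood of the origin; transporting back by $\psi(z)$, the genuine Hodge filtration $\mathcal{F}(t)$ is opposite to $\mathcal{W}$ at every point of a punctured polydisc. Thus $(\mathcal{F},\mathcal{W}[-k])$ defines a fibrewise Hodge--Tate mixed Hodge structure there. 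To finish I would check that this fibrewise data is a variation: Griffiths transversality for $\mathcal{F}$ is inherited from the original weight-$k$ VHS and $\mathcal{W}$ is flat, while the graded pieces $\Gr^{\mathcal{W}}_{2l}$ are of pure type $(l,l)$, hence constant Tate structures that are trivially graded-polarized by the induced $(N,Q)$-polarization of Schmid; admissibility near the puncture is then immediate.

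I expect the main obstacle to be the several-variable input rather than the openness step. One must know that the twisted flag $\widetilde{\mathcal{F}}$ is genuinely holomorphic across the entire normal-crossing boundary, and that $M(N)$ is independent of the element $N$ chosen in the open monodromy cone $\sigma=\{\sum\lambda_jN_j:\lambda_j>0\}$, so that $\mathcal{W}$ is a well-defined flat filtration recording the same weight data along all boundary strata. This cone-independence and the holomorphic extension both require the $SL_2$-orbit theorem of Cattani--Kaplan--Schmid, not merely the one-variable nilpotent orbit theorem; granting these inputs, the untwisting and openness arguments globalize without modification, and the conclusion that the pairing $(\mathcal{F},\mathcal{W}[-k])$ is Hodge--Tate on a neighbourhood of $0$ follows.
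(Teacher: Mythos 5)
The paper contains no proof of this statement for you to be compared against: Theorem 6.1 is quoted as background, attributed to Deligne's letter to Morrison \cite{D2}, and the surrounding text of Section 6 simply uses it (the proof is in the cited literature, notably \cite{P2}). Judged on its own terms, your argument is correct, and it is in substance the standard proof from that literature: (a) $\mathcal{W}=M(N)[-k]$ is flat and well defined because each $N_j$ is a $(-1,-1)$-morphism of the limiting mixed Hodge structure and because the weight filtration $M(\sigma)$ is independent of the element chosen in the open monodromy cone, both of which rest on Cattani--Kaplan--Schmid \cite{CKS}; (b) the untwisted flag $\widetilde{\mathcal{F}}(t)=\exp\bigl(-\sum_j z_jN_j\bigr)\mathcal{F}(t)$ is single-valued and extends holomorphically across the boundary with value $F_{\infty}$ (nilpotent orbit theorem); (c) the untwisting automorphism preserves $\mathcal{W}$, so oppositeness of $(\mathcal{F}(t),\mathcal{W})$ and of $(\widetilde{\mathcal{F}}(t),\mathcal{W})$ are equivalent; (d) the condition $V=F^p\oplus\mathcal{W}_{2p-1}$ for all $p$ is Zariski open on the flag variety, holds at $t=0$ by the Hodge--Tate hypothesis, and therefore persists on a neighborhood. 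You also correctly identify the genuinely several-variable inputs (cone-independence of $M(\sigma)$ and holomorphy of $\widetilde{\mathcal{F}}$ along the full normal-crossing boundary) as the only nonelementary ingredients. Two points are worth making explicit in a final write-up: first, that $\mathcal{W}$ is defined over $\mathbb{Q}$ (it is the weight filtration of rational nilpotent endomorphisms), without which the assertion that each fiber carries a mixed Hodge structure has no content; second, that oppositeness alone only yields a Deligne-type bigrading, and it is the Tate shape of $\mathcal{W}$ (vanishing odd graded pieces, so that $F$ induces on each $Gr_{2l}^{\mathcal{W}}$ the one-step filtration of pure type $(l,l)$) that guarantees every nearby fiber is an honest Hodge--Tate mixed Hodge structure. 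With those remarks added, your proof is complete and agrees with the argument the paper implicitly defers to.
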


\vspace{0.2cm}

\begin{theorem} (\cite{P2} Theorem 3.28)
Let $\mathcal{V}$ be a variation of mixed Hodge structure, and 
\begin{center}
$\mathcal{V}=\displaystyle{\bigoplus_{p,q}}I^{p,q}$
\end{center}
denotes the $C^{\infty}$-decomposition of $\mathcal{V}$ to the sum of $C^{\infty}$-subbundles, defined by point-wise application of Deligne theorem. Then the Hodge filtration $\mathcal{F}$ of $\mathcal{V}$ pairs with the increasing filtration

\begin{equation}
\bar{U}_q=\sum_k \bar{\mathcal{F}}^{k-q} \cap \mathcal{W}_k
\end{equation}

\vspace{0.3cm}

\noindent
to define an un-polarized $\mathbb{C}VHS$.
\end{theorem}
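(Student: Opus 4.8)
The plan is to realize the pair $(\mathcal{F},\bar{U})$ as the two opposed filtrations of a complex variation of Hodge structure whose Hodge bundles are the $C^\infty$ subbundles $A^p:=\mathcal{F}^p\cap\bar{U}_p$. Accordingly I would split the statement into two essentially independent tasks: first, that at every point the filtrations $\mathcal{F}$ and $\bar{U}$ are opposed, so that $\mathcal{V}=\bigoplus_p A^p$ is a genuine $C^\infty$ bigrading; and second, that the flat connection $\nabla$ decomposes with respect to this bigrading in the Higgs-field manner demanded of a $\mathbb{C}$VHS, namely $\nabla^{1,0}A^p\subset(A^{p}\oplus A^{p-1})\otimes\mathcal{A}^{1,0}$ and $\nabla^{0,1}A^p\subset(A^{p}\oplus A^{p+1})\otimes\mathcal{A}^{0,1}$. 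Since the conclusion is for an \emph{un}-polarized structure, no positivity has to be produced; this is precisely what frees the argument from the Hodge--Tate hypothesis of the preceding theorem of Deligne and lets it run for an arbitrary admissible $\mathcal{V}$.

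For the first task I would argue entirely inside the fibrewise Deligne bigrading $I^{p,q}$. Using $\mathcal{F}^p=\bigoplus_{r\ge p}I^{r,s}$, $\mathcal{W}_k=\bigoplus_{r+s\le k}I^{r,s}$, and the congruence $\overline{I^{p,q}}\equiv I^{q,p}$ modulo $\bigoplus_{r<q,\,s<p}I^{r,s}$, I would compute $\bar{\mathcal{F}}^{\,k-q}\cap\mathcal{W}_k=\bigoplus_{s\ge k-q,\ r+s\le k}I^{r,s}$ and then, summing over $k$, obtain the clean identity $\bar{U}_q=\bigoplus_{r\le q}I^{r,s}$. Comparing with $\mathcal{F}^p=\bigoplus_{r\ge p}I^{r,s}$ yields the opposedness $\mathcal{V}=\mathcal{F}^p\oplus\bar{U}_{p-1}$ for every $p$, and hence $A^p=\mathcal{F}^p\cap\bar{U}_p=\bigoplus_s I^{p,s}$. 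The point requiring care is that the definition of $\bar{U}$ via the intersections with the $\mathcal{W}_k$ followed by the sum over $k$ is exactly what absorbs the off-diagonal correction terms of $\overline{I^{p,q}}$ in the non-$\mathbb{R}$-split case, so that the identity $\bar{U}_q=\bigoplus_{r\le q}I^{r,s}$ holds verbatim rather than only up to lower weight.

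For the second task I would exploit the structure already carried by $\mathcal{V}$. Griffiths transversality of the VMHS gives $\nabla^{1,0}\mathcal{F}^p\subset\mathcal{F}^{p-1}\otimes\mathcal{A}^{1,0}$, and holomorphicity of $\mathcal{F}$ gives $\nabla^{0,1}\mathcal{F}^p\subset\mathcal{F}^{p}\otimes\mathcal{A}^{0,1}$. Conjugating, $\bar{\mathcal{F}}$ obeys the opposite transversality, and because the weight filtration consists of \emph{flat} subbundles the combination defining $\bar{U}$ inherits $\nabla^{1,0}\bar{U}_q\subset\bar{U}_q\otimes\mathcal{A}^{1,0}$ together with $\nabla^{0,1}\bar{U}_q\subset\bar{U}_{q+1}\otimes\mathcal{A}^{0,1}$; here I would use that tensoring by the locally free $\mathcal{A}^{\bullet,\bullet}$ commutes with finite intersections of subbundles. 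Intersecting these with the transversality of $\mathcal{F}$ on $A^p$, and using $\mathcal{F}^{p-1}\cap\bar{U}_p=A^{p-1}\oplus A^p$ and $\mathcal{F}^{p}\cap\bar{U}_{p+1}=A^{p}\oplus A^{p+1}$, produces precisely the two Higgs-type inclusions above, which is the $\mathbb{C}$VHS.

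The step I expect to be the main obstacle is the transversality of $\bar{U}$. This filtration is a genuine mixture of $\bar{\mathcal{F}}$ and $\mathcal{W}$, and neither factor in isolation yields the required statement; one must invoke admissibility of $\mathcal{V}$ — the existence of the relative monodromy weight filtration and the resulting smooth variation of the Deligne bigrading, with the Hodge--Tate case of Deligne's theorem as prototype — both to guarantee that the $A^p$ are honest $C^\infty$ subbundles of locally constant rank and to ensure that the filtered intersections are stable under the respective components of $\nabla$. Once smoothness and these transversality relations are secured, the assembly of $(\mathcal{F},\bar{U})$ into an un-polarized $\mathbb{C}$VHS is formal.
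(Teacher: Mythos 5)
The paper never proves this statement: Theorem 6.2 is imported verbatim from Pearlstein--Fernandez (\cite{P2}), so the only proof to compare yours against is the one in that source --- and your plan is essentially that proof. You identify $\bar{U}_q=\bigoplus_{r\le q,\,s}I^{r,s}$ pointwise in the Deligne bigrading, deduce oppositeness $\mathcal{V}=\mathcal{F}^p\oplus\bar{U}_{p-1}$ and the Hodge bundles $A^p=\mathcal{F}^p\cap\bar{U}_p=\bigoplus_q I^{p,q}$, and then obtain the two type-$(1,0)$/$(0,1)$ inclusions from holomorphicity and Griffiths transversality of $\mathcal{F}$, their conjugates for $\bar{\mathcal{F}}$, and flatness of $\mathcal{W}$; this is exactly the architecture of the argument in \cite{P2}, and your bookkeeping $\nabla^{1,0}\bar{U}_q\subset\bar{U}_q\otimes\mathcal{A}^{1,0}$, $\nabla^{0,1}\bar{U}_q\subset\bar{U}_{q+1}\otimes\mathcal{A}^{0,1}$ (via $\bar{\mathcal{F}}^{k-q-1}\cap\mathcal{W}_k=\bar{\mathcal{F}}^{k-(q+1)}\cap\mathcal{W}_k$) is correct.

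Two local repairs are needed. First, your intermediate identity $\bar{\mathcal{F}}^{k-q}\cap\mathcal{W}_k=\bigoplus_{s\ge k-q,\ r+s\le k}I^{r,s}$ is false in general: it holds only when the MHS is $\mathbb{R}$-split, and asserting it as an equality is inconsistent with your own (correct) remark that the off-diagonal corrections must be absorbed. What is true is $\bar{\mathcal{F}}^{k-q}\cap\mathcal{W}_k=\overline{\mathcal{F}^{k-q}\cap\mathcal{W}_k}$, since $\mathcal{W}$ is real; the clean identity $\bar{U}_q=\bigoplus_{r\le q}I^{r,s}$ then needs both inclusions proved separately: containment $\subset$ because $\bar{v}\in\mathcal{F}^{k-q}\cap\mathcal{W}_k$ forces the Deligne components of $\bar{v}$ to have second index $\le q$ while $\overline{I^{p,q'}}\subset\bigoplus_{r\le q'}I^{r,s}$ by Deligne's congruence, and containment $\supset$ by induction on $r+s$ using $I^{r,s}\subset\overline{I^{s,r}}+\sum_{a<r,\,b<s}I^{a,b}$ with $k=r+s$, the inductive hypothesis handling the correction terms. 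Second, admissibility is irrelevant here and should be dropped: the statement is purely interior, and the $I^{p,q}$ are $C^\infty$ subbundles of locally constant rank for an \emph{arbitrary} VMHS, because each $Gr_k^{\mathcal{W}}$ is a VHS, so the Hodge numbers $h^{p,q}$ are locally constant and Deligne's explicit formula for $I^{p,q}$ involves only constant-rank intersections and sums of smooth subbundles; admissibility enters only in the degeneration statements that follow in this section (Theorems 6.3 and 6.4), not in Theorem 6.2. With these two repairs your argument is complete and agrees with the source.
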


\vspace{0.2cm}

Given a pair of increasing filtrations $A$ and $B$ of a vector space $V$ one can define the convolution $A*B$ to be the increasing filtration $A*B=\sum_{r+s=q}A_r \cap B_s.$

\vspace{0.2cm}

\begin{theorem} (G. Pearlstein-J. Fernandez)\cite{P2}
Let $\mathcal{V}$ be an admissible variation of graded polarized mixed Hodge structures with quasi-unipotent monodromy, and $\mathcal{V}=\oplus I^{p,q}$ the decomposition relative to the limiting mixed Hodge structure. Define

\begin{equation}
U'_p=\displaystyle{\bigoplus_{a \leq p}}I^{p,q}
\end{equation}

\vspace{0.3cm}

\noindent
and $\mathfrak{g}_{-}=\{\alpha \in \mathfrak{g}_{\mathbb{C}} | \alpha(U'_p) \subset U'_{p-1}\}$, then;

\vspace{0.3cm}

\begin{itemize}

\item[(a)] $U'$ is opposite to $F_{\infty}$. Moreover, relative to the decomposition
\begin{equation}
\mathfrak{g}=\displaystyle{\bigoplus_{r,s}}\mathfrak{g}^{r,s}.
\end{equation}

\vspace{0.3cm}

\item[(b)] If $\psi(s):\triangle^{*n} \to \check{D}$ is the associated untwisted period map, then in a neighborhood of the origin it admits a unique representation of the form 
\begin{equation}
\psi(s)=e^{\Gamma(s)}.F_{\infty}
\end{equation}

\vspace{0.3cm}

\noindent
where $\Gamma(s)$ is a $\mathfrak{g}_{-}$-valued function.

\item[(c)] $U'$ is independent of the coordinate chosen for $F_{\infty}$. Moreover,

\begin{equation}
U'=\overline{F_{nilp}^{\vee}}*W=\overline{F_{\infty}^{\vee}}*W.
\end{equation}

\vspace{0.3cm}

\end{itemize}
\end{theorem}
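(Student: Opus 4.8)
The plan is to establish the three parts in sequence, using the limiting Deligne bigrading $\{I^{p,q}\}$ as the organizing device and importing the nilpotent orbit theorem as a black box. First I would record that the endomorphism bundle carries an induced admissible VMHS, so that $\mathfrak{g}_{\mathbb{C}}$ acquires the limiting bigrading $\mathfrak{g}_{\mathbb{C}}=\bigoplus_{r,s}\mathfrak{g}^{r,s}$ with $\mathfrak{g}^{r,s}=\{X:X(I^{p,q})\subset I^{p+r,q+s}\}$; this is exactly the decomposition named in (a). For the opposedness assertion I would use the standard formulas $F_{\infty}^p=\bigoplus_{r\ge p}I^{r,s}$ and $U'_{p-1}=\bigoplus_{r\le p-1}I^{r,s}$ (summing over the second index). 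Since every summand $I^{r,s}$ falls into exactly one of these two ranges, one gets $V_{\mathbb{C}}=F_{\infty}^p\oplus U'_{p-1}$ for all $p$, which is precisely the defining relation (10) of opposite filtrations. The same bookkeeping yields $\mathfrak{g}_{-}=\bigoplus_{r<0}\mathfrak{g}^{r,s}$.

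For part (b) I would first apply the nilpotent orbit theorem so that, after untwisting, the period map $\psi(s)$ is a holomorphic map converging to $F_{\infty}$ as $s\to 0$. The geometric engine is the transversality supplied by (a): the isotropy subalgebra of $F_{\infty}$ in the compact dual $\check D$ is $\mathfrak{q}=\bigoplus_{r\ge 0}\mathfrak{g}^{r,s}$, and opposedness gives the direct-sum decomposition $\mathfrak{g}_{\mathbb{C}}=\mathfrak{g}_{-}\oplus\mathfrak{q}$. Consequently the orbit map $X\mapsto e^{X}F_{\infty}$ restricts to a local biholomorphism from a neighborhood of $0$ in $\mathfrak{g}_{-}$ onto a neighborhood of $F_{\infty}$ in $\check D$. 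Because $\psi$ is holomorphic and lands in that neighborhood for $s$ near the origin, it factors uniquely as $\psi(s)=e^{\Gamma(s)}F_{\infty}$ with $\Gamma$ a holomorphic $\mathfrak{g}_{-}$-valued function, which is the claim.

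Part (c) is where the genuine content lies, and I expect it to be the main obstacle. The filtration $U'$ is defined through the $I^{p,q}$ and so appears to depend on the Deligne splitting; I would remove this dependence by proving the convolution identity $U'=\overline{F_{\infty}^{\vee}}*W$ directly, where $(A*B)_p=\sum_{r+s=p}A_r\cap B_s$ and $F_{\infty}^{\vee}$ is the increasing filtration $(F_{\infty}^{\vee})_r=F_{\infty}^{-r}$. Using the Deligne characterization $I^{p,q}\equiv (F_{\infty}^p\cap W_{p+q})\cap\overline{F_{\infty}^q\cap W_{p+q}}$ modulo terms of strictly lower weight, I would sum over $a\le p$ and regroup; the point is that convolution with $W$ absorbs precisely the weight-lowering corrections, so that $(\overline{F_{\infty}^{\vee}}*W)_p=\bigoplus_{a\le p}I^{a,b}=U'_p$. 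This display is manifestly splitting-independent, giving both intrinsicality and the stated formula, and it identifies $U'$ with the filtration $\bar U$ of equation (23). The remaining equality $\overline{F_{nilp}^{\vee}}*W=\overline{F_{\infty}^{\vee}}*W$ then follows because $F_{nilp}$ and $F_{\infty}$ induce the same filtration on each $Gr^W$, differing only by the weight-negative $\delta$-splitting component in $\bigoplus_{r<0}\mathfrak{g}^{r,s}$, to which convolution against $W$ is insensitive. The delicate step throughout is controlling these lower-weight correction terms in the bigrading and verifying that they are annihilated by the convolution; this is the computation I would carry out most carefully.
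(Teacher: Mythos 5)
You should first be aware that the paper contains no proof of this statement to compare against: it is quoted, with attribution, from Pearlstein--Fernandez \cite{P2}, and the surrounding text simply invokes it in the sequel. Judged on its own merits, your reconstruction follows what is in fact the strategy of the cited source, and it is essentially correct. Part (a) is immediate from the functoriality of the Deligne bigrading, since $F_{\infty}^p=\bigoplus_{r\ge p}I^{r,s}$ and $U'_{p-1}=\bigoplus_{r\le p-1}I^{r,s}$ give $V_{\mathbb{C}}=F_{\infty}^p\oplus U'_{p-1}$ for all $p$, and likewise $\mathfrak{g}_{-}=\bigoplus_{r<0}\mathfrak{g}^{r,s}$; note that you correctly repaired two defects in the statement as printed, namely the phantom index $a$ in the definition of $U'_p$ (which should read $U'_p=\bigoplus_{a\le p}I^{a,q}$) and the sentence in (a) that trails off before asserting this description of $\mathfrak{g}_{-}$. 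For (b), the decomposition $\mathfrak{g}_{\mathbb{C}}=\mathfrak{g}_{-}\oplus\mathfrak{q}$ with $\mathfrak{q}=\bigoplus_{r\ge 0}\mathfrak{g}^{r,s}$ the isotropy algebra of $F_{\infty}$, making $X\mapsto e^{X}F_{\infty}$ a local biholomorphism near $0\in\mathfrak{g}_{-}$, is the standard argument; the one caution is that in the mixed setting the holomorphic extension of the untwisted period map over the origin is not Schmid's nilpotent orbit theorem but is precisely what the admissibility hypothesis (Steenbrink--Zucker, Kashiwara) supplies, so admissibility is used there rather than an orbit theorem taken as a black box.

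In part (c) two points that your sketch gets right only implicitly should be made explicit. First, the filtration $W$ entering the convolution must be the weight filtration of the \emph{limiting} mixed Hodge structure, i.e.\ the relative monodromy weight filtration: this is exactly what your inclusion $I^{p,q}\subset W_{p+q}$ presupposes. With the weight filtration of the variation itself the formula fails already for a degenerating elliptic curve, where the convolution would return $\overline{F_{\infty}^{1-q}}$, which depends on the choice of coordinate, contradicting the coordinate-independence asserted in (c), whereas $U'_0=I^{0,0}$ does not. Second, the ``absorption'' of lower-order terms is cleanest as an induction on weight using $\overline{I^{p,q}}\equiv I^{q,p}$ modulo $\bigoplus_{a<q,\,b<p}I^{a,b}$, and the delicate comparison of $F_{nilp}$ with $F_{\infty}$ can be short-circuited: since $g=e^{\sum_j\bar{z}_jN_j}$ preserves each $W_s$, one has $\bigl(g\,\overline{F_{\infty}^{\vee}}\bigr)*W=g\bigl(\overline{F_{\infty}^{\vee}}*W\bigr)=g\,U'=U'$, the last equality because each $N_j$ is a $(-1,-1)$-morphism of the limiting mixed Hodge structure and hence maps $U'_q$ into $U'_{q-1}$. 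With these clarifications your proof is complete and coincides with that of \cite{P2}; there is no argument in the paper itself to weigh it against.
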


\vspace{0.2cm}

\noindent
Here above $F_{nilp}$ is an arbitrary element in the nilpotent orbit of the limit Hodge filtration corresponded to the nilpotent cone (i.e. positive linear combination) of the logarithms of the generators of the monodromy group, i.e $F_{nilp}=exp(z_1N_1+...+z_rN_r)F_{\infty}$ where $N_k$ are logarithms of different local monodromies, cf. \cite{P2}. 

\vspace{0.2cm}

\begin{theorem} Let $\mathcal{V}$ be an admissible variation of polarized mixed Hodge structure associated to a holomorphic germ of an isolated hyper-surface singularity. Set 

\begin{equation}
U'=\overline{F_{\infty}^\vee}*W.
\end{equation}

\vspace{0.3cm}

\noindent
Then $U'$ extends to a filtration $\underline{U'}$ of $\mathcal{V}$ by flat sub-bundles, which pairs with the limit Hodge filtration $\mathcal{F}$ of $\mathcal{V}$, to define a polarized $\mathbb{C}$-variation of Hodge structure, on a neighborhood of the origin.
\end{theorem}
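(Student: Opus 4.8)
The plan is to combine the general structural results of Pearlstein and Fernandez (Theorems 6.2 and 6.3) with the explicit polarization furnished in Section 5 by the modified Grothendieck residue. First I would observe that the three purely structural assertions---that $U' = \overline{F_\infty^\vee}*W$ is opposite to $F_\infty$, is independent of the coordinate used to trivialize the period map, and extends to a filtration $\underline{U'}$ of $\mathcal{V}$ by flat subbundles---are immediate from Theorem 6.3, since the variation $\mathcal{V}$ attached to an isolated hypersurface singularity is admissible and graded polarized, its grading polarizations being supplied by the Steenbrink structure. The representation $\psi(s) = e^{\Gamma(s)}F_\infty$ with $\Gamma(s)$ valued in $\mathfrak{g}_-$ exhibits the horizontality of $\underline{U'}$, and Theorem 6.2 then shows that the pair $(\mathcal{F}, \underline{U'})$ defines an a priori unpolarized $\mathbb{C}$-variation of Hodge structure on a neighborhood of the origin.

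It remains to produce the polarization, and this is where the hypersurface hypothesis is used. I would recall that the Steenbrink variation is polarized by the form $S$ on $H^n(X_\infty,\mathbb{C})$, and that by Theorems 5.1 and 5.4 the isomorphism $\Phi$ carries $S$ to the modified Grothendieck residue $\widehat{Res} = res_{f,0}(\bullet,\tilde{C}\,\bullet)$ on the extended fiber $\Omega_f$, the sign operator $\tilde{C}$ acting by $(-1)^p$ on $J^{p,q}$. The claim to establish is that $S$, equivalently $\widehat{Res}$, polarizes the new $\mathbb{C}$-VHS. For this I would pass to the graded pieces of the new weight filtration: the Deligne bigrading $I^{p,q}$ relative to the limiting mixed Hodge structure recovers the fiberwise Hodge decomposition attached to $(\mathcal{F},U')$, and the Riemann--Hodge bilinear relations of Corollary 5.5 supply exactly the orthogonality $\widehat{Res}_l(x,y)=0$ for $x\in P_r'$, $y\in P_s'$ with $r\ne s$, together with the positivity $Const \times res_{f,0}(C_l x,\tilde{C}\,\mathfrak{f}^l\bar{x})>0$ on each primitive subspace $P_l'$. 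These are precisely the two conditions defining a polarization of the associated graded of the new structure.

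The main obstacle will be the sign bookkeeping that matches the Weil operator of the $\mathbb{C}$-VHS $(\mathcal{F},\underline{U'})$ with the operator $\tilde{C}$ and the Weil operators $C_l$ of Corollary 5.5. Passing from $F$ to the pair $(F,U')$ reindexes the Hodge decomposition, and one must verify that the twist $\tilde{C}=(-1)^p$ compensates exactly for this reindexing, so that positivity of the modified residue becomes the positivity demanded by the Weil operator of the new variation. This rests on the Varchenko compatibility of Section 1 between the nilpotent endomorphism $\mathfrak{f}$ (multiplication by $f$ on $\Omega_f$) and the monodromy logarithm $N$, which have the same Jordan type; the Lefschetz decomposition $Gr_l^W\Omega_f=\bigoplus_r \mathfrak{f}^r P_{l-2r}'$ then matches, term by term, the decomposition governing the polarization of the $\mathbb{C}$-VHS.

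Finally, to upgrade this fiberwise positivity at $0$ to a polarized variation over a full neighborhood, I would invoke the openness of the positivity conditions together with the flatness of $\underline{U'}$: the forms under consideration vary holomorphically (as already witnessed by the holomorphy of $S_{ij}(s,z)$ for $z\in\mathbb{C}^*$ in Theorem 5.1), so the relations, holding in the limit, persist on a shrunken neighborhood of the origin. This yields the desired polarized $\mathbb{C}$-variation of Hodge structure.
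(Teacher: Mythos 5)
There is nothing in the paper to compare your argument against line by line: Theorem 6.4 is stated without proof, followed only by an interpretive remark on what a polarization of the resulting $\mathbb{C}$-VHS would mean, so your assembly of Theorems 6.2, 6.3 and the Section 5 results is certainly the intended skeleton. Still, two points in your write-up are genuine gaps rather than omitted routine steps. First, the citation of Theorem 6.2 does not do the work you assign it: that theorem concerns the increasing filtration $\bar{U}_q=\sum_k \bar{\mathcal{F}}^{k-q}\cap\mathcal{W}_k$ built pointwise from the \emph{varying} Hodge filtration, whereas $U'=\overline{F_{\infty}^\vee}*W$ lives on the limit fiber; identifying the two near the puncture is itself a nontrivial assertion of \cite{P2} and is not contained in the quoted statement. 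Likewise, the flat extension $\underline{U'}$ is not ``immediate from Theorem 6.3'': what one actually needs is monodromy invariance of $U'$, which does follow from 6.3(c) (replacing $F_{nilp}$ by $e^{N}F_{nilp}$, which stays in the nilpotent orbit, leaves $\overline{F_{nilp}^{\vee}}*W$ unchanged), but that one step is precisely what makes the extension exist and should be said explicitly.

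Second, and more seriously, the polarization step does not close. Corollary 5.5 supplies only \emph{graded} data: nondegenerate level forms $\widehat{Res}_l$ with positivity on the primitive subspaces $P_l'$ of $Gr_l^W\Omega_f$. The theorem demands a single parallel hermitian form on the whole bundle making the Hodge bundles $\mathcal{V}^p=\underline{U'}_p\cap\mathcal{F}^p$ orthogonal and positive definite after the $(-1)^p$ twist. Your plan to propagate this ``by openness'' works at most for the positivity half; orthogonality of the Hodge bundles is a closed condition and cannot be obtained from positivity at the origin plus holomorphy --- it must be verified identically, typically by an eigenvalue/weight argument for the flat form. Moreover, even at the central fiber, passing from the forms $\widehat{Res}_l$ on $PGr_l^W$ to a hermitian form on the unsplit $\Omega_f$ requires a splitting of $W$ compatible with the pairing (this is exactly why the paper invokes the Cattani--Kaplan--Schmid $\delta$-splitting and the operator $\tilde{C}_1$ after Corollary 6.5); without it, cross terms between distinct weight levels are uncontrolled. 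Note finally that the paper's own remark immediately following the theorem shows how strong the conclusion is: any such parallel polarization forces $\mathcal{V}^p=U'_p\cap F^p$ to be flat. A complete proof must actually produce this flatness --- for instance from the $\mathfrak{g}_{-}$-valued representation $\psi(s)=e^{\Gamma(s)}F_{\infty}$ of 6.3(b) --- rather than treat the polarization as a limit property that persists on a shrunken disc.
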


The polarization of a complex variation of Hodge structure will probably be interpreted to mean a parallel hermitian form which makes the system of Hodge bundles $\mathcal{V}^p$ orthogonal, and becomes positive definite on multiplying the form by $(-1)^p$ on $\mathcal{V}^p$. Suppose that in the situation of Theorem 8.6 there is any such hermitian form $\mathcal{R}$. Then, on the one hand since $\mathcal{R}$ and $U'$ are flat, so is the orthogonal complement of $U'_{p-1}$ in $U'_p$. On the other hand, the way things have been setup, the orthogonal complement of $U'_{p-1}$ in $U'_p$ is exactly

\begin{equation}
\mathcal{V}^p = U'_p \cap F^p.
\end{equation}

\vspace{0.3cm}

\noindent
But this is the system of Hodge bundles, and so the Hodge filtration is also flat. The above discussion also proves the following,

\vspace{0.2cm}

\begin{corollary}
The mixed Hodge structure on the extended fiber $\Omega_f$
defined in Sec. 4, can be identified with 

\[ \Phi(U'=\overline{F_{\infty}^\vee}*W) \]

\vspace{0.3cm}

\noindent
where $\Phi$ is as in 8.5.
\end{corollary}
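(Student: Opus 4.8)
The plan is to show that the opposite filtration $U'$ appearing in Theorems 8.3 and 8.4 is literally the opposite filtration constructed in Section 4, after which the identification follows by transporting across $\Phi$. Both filtrations live on $H^n(X_\infty,\mathbb{C})$ and both are opposite to the limit Hodge filtration $F = F_\infty$, so the content of the corollary is the equality of these two a priori different constructions of the same type of object.

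First I would recall that the mixed Hodge structure placed on $\Omega_f$ in Section 4 is by definition $\Phi$ applied to the Steenbrink data $(F,W)$, and that Proposition 4.2 already produced an opposite filtration $U'_q = U^{n-q} = \psi^{-1}\{\oplus_\alpha C^\alpha \cap V^{\alpha+n-q}H''\}$ on $H^n(X_\infty,\mathbb{C})$, with $F^p H^n(X_\infty)_\lambda \cong U'_p H^n(X_\infty)$. Next I would write both this $U'$ and the convolution $\overline{F_\infty^\vee}*W$ of Theorem 8.3(c) in terms of the Deligne bigrading $H^n(X_\infty,\mathbb{C}) = \oplus_{p,q,\lambda} I^{p,q}_\lambda$ of the limit MHS. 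By Theorem 8.3(a) the filtration $U'_p = \oplus_{a \le p} I^{a,b}$ is opposite to $F_\infty$, and by part (c) it equals $\overline{F_\infty^\vee}*W$ and is independent of the chosen coordinate. Since the filtration $\oplus_{a \le p} I^{a,b}$ is canonically determined by the Deligne bigrading, it suffices to check that the Section 4 filtration has the same graded pieces, i.e. that $U'_p$ of Section 4 equals $\oplus_{a \le p} I^{a,b}$; equivalently that $\oplus_\alpha C^\alpha \cap V^{\alpha+n-p}H''$ selects exactly $\oplus_{a \le p} I^{a,b}$.

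The bridge between the two descriptions is that the Deligne splitting of the Steenbrink limit MHS is governed by the $V$-index $\alpha$ — which fixes the eigenvalue $\lambda = \exp(-2\pi i\alpha)$ and, together with the exponent of $\partial_t$, the Hodge degree $p$ — and by the monodromy weight filtration of $N$. Using the relation $Gr^p_F H^n(X_\infty)_\lambda = Gr_V^{\alpha+n-p}\Omega_f$ from Section 1 together with $F^p H^n(X_\infty)_\lambda \cong U'_p H^n(X_\infty)$ from Proposition 4.2, I would verify that the pairing of $F$ against the Section 4 filtration $U'$ is exactly the pairing of $\mathcal{F}$ against $\overline{F_\infty^\vee}*W$ recorded in Theorem 8.4. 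Applying $\Phi$ then sends $U'$ to the weight-and-Hodge data already defining the MHS on $\Omega_f$, giving the asserted identification $\Phi(U' = \overline{F_\infty^\vee}*W)$.

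The main obstacle I anticipate is the bookkeeping matching the Deligne bigrading against the complex conjugation on elementary sections. Because $\overline{F_\infty^\vee}$ involves conjugation, one must invoke the explicit formulas $\psi_\alpha^{-1}(\overline{t^\alpha(\log t)^l A_{\alpha,l}})$ recorded above, and reconcile the symmetry $\alpha \leftrightarrow 1-\alpha$ (and the special case $\alpha = 0$) of the conjugation with the indexing of the convolution $\overline{F_\infty^\vee}*W$. Once this compatibility of the conjugation with the $V$-filtration shift is established, the remainder is formal: Theorem 8.3(c) pins down the opposite filtration via the bigrading, and Theorem 8.4 guarantees that the resulting pairing with $\mathcal{F}$ is the polarized $\mathbb{C}$-variation of Hodge structure whose fiber at $0$ carries the mixed Hodge structure of Section 4.
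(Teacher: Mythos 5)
Your proposal is correct and takes essentially the same route as the paper, whose entire proof is a one-line citation of exactly the results you assemble: Proposition 4.2 for the Brieskorn-lattice opposite filtration, the Pearlstein--Fernandez theorem (your 8.3, parts (a) and (c)) identifying $U'_p=\oplus_{a\le p}I^{a,b}$ with $\overline{F_{\infty}^\vee}*W$, and the extension theorem (your 8.4) for the flat pairing with $\mathcal{F}$, all transported through $\Phi$. Your sketch merely makes explicit the bigrading comparison and the conjugation bookkeeping on elementary sections that the paper leaves implicit.
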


\begin{proof}
The corollary is a consequence of 6.3, 6.4 (c) and 6.5.
\end{proof}

\vspace{0.2cm}

By a theorem of Kattani-Caplan-Schmid, \cite{CKS}, there exists a unique nilpotent transformation $\delta $ in the Lie algebra of the linear endomorphisms $End_{\mathbb{R}}(\Omega_f)^{-1,-1}$ preserving the weight filtration, s.t. $(W,e^{-2i \delta}.F)$ is a mixed Hodge structure splits over $\mathbb{R}$ on $\Omega_f$. Then,

\[ \tilde{F}:=e^{i.\delta}.F. \]

\vspace{0.3cm}

\noindent
Since $\delta \in \mathfrak{g}_{\mathbb{R}}^{-1,-1} \subset W_{-2}^{\mathfrak{gl}}$, this element leaves $W$ invariant and acts trivially on the quotient $Gr_l^W$. Therefore both $F,\tilde{F}$ induce the same filtrations on $Gr_l^W H$. The bigrading $J_1^{p,q}$ defined by $J_1^{p,q}:=e^{-i.\delta}.J^{p,q}$ is split over $\mathbb{R}$. The operator $\tilde{C}_1=Ad(e^{-i.\delta}).\tilde{C}:\Omega_f \to \Omega_f$ defines a real structure on $\Omega_f$. If $\Omega_{f,1}=\oplus_{p < q} J_1^{p,q}$ then 

\[ \Omega_f=\Omega_{f,1} \oplus \overline{\Omega_{f,1}} \oplus \bigoplus_pJ_1^{p,p}, \qquad \overline{J_1^{p,p}}=J_1^{p,p}. \]

\vspace{0.3cm}

\noindent
The statement of theorem 5.1 is valid when the operator $\tilde{C}$ is replaced with $\tilde{C}_1$; 

\begin{equation}
S(\Phi^{-1}(\omega),\Phi^{-1}(\eta))= * \times \ \text{res}_{f,0}(\omega,\tilde{C}_1.\eta), \qquad 0 \ne * \in \mathbb{C}
\end{equation}

\vspace{0.3cm}

\noindent
and this equality is defined over $\mathbb{R}$. The content of the Theorem 8.1 is related to the $\mathfrak{sl}_2$-orbit theorem. The real splitting $I_1^{q,p} =\overline{I_1^{p,q}}$ corresponds to a semisimple transformation $Y_1 .v=(p+q-k) .v$ for $v \in I_1^{p,q}$. Then the pair $\{Y_1,N\}$ can be completed to an $\mathfrak{sl}_2$-triple $\{N_1^{+}, Y_1,N\}$. $N_1^{+} $ is real and $\delta,Y_1, N_1^+ \in \mathfrak{g}_{\mathbb{R}}$ are infinitesimal isometries of the polarization \cite{CKS} page 477. This shows that $\Omega_f$ can be equipped with a MHS that is real split and is a sum of pure Hodge structures.

\[ \Omega_f=\bigoplus_k \bigoplus_{p+q=k} J_1^{p,q} \]

\vspace{0.3cm}

\noindent
and also an $\mathfrak{sl}_2$-triple $\{ \mathfrak{f}_1^+,Y_1', \mathfrak{f} \}$ as infinitesimal isometries of the bilinear form $\widehat{Res}_{f,0}$ which are morphisms of the MHS $(F_1',W)$ explained above and are of types $(1,1),(0,0), \\ (-1,-1)$ respectively.

\vspace{0.5cm} 

\section{Applications}

We provide two applications of the procedure explained in section 4, 5, and 6.

\vspace{0.3cm}

(1) Let $C$ be a smooth projective curve over the field $\mathbb{C}$, and $J(C)$ its Jacobian. Then it is well known fact that the Poincar\`e duality of $H^1(C,\mathbb{C})$ can be identified with the polarization of $J(C)$, given by the $\Theta$-divisor. We want to consider this situation in a family $C_s$ parametrized by a 1-dimensional variety $S$. Suppose that 

\[ J^1(H_s^1)=H^1_{s,\mathbb{Z}} \setminus H_{s,\mathbb{C}}^1/F^0H_{s,\mathbb{C}}^1 \]

\[ J(\mathcal{H})= \displaystyle{\bigcup_{s \in S^*}J^1(H_s)} \]

\vspace{0.3cm}

\noindent
is the family of Jacobians associated to the variation of Hodge structure in a projective degenerate family of algebraic curves (here we have assumed the Hodge structures have weight -1), and $\dim(S)=1$. Then the fibers of this model are principally polarized abelian varieties. The polarization of each fiber is given by the Poincar\`e product of the middle cohomology of the curves, via a holomorphic family of $\Theta$-divisors. We are going to apply the construction in sec. 4 and 5 to the variation of Jacobians. 
\vspace{0.2cm}

\begin{theorem}
Consider a projective family of smooth curves parametrized by the manifold $S$ where $\dim(S)=1$ and degenerate over an isolated point in $S$. Let the Jacobian bundle $J(\mathcal{H})$ be defined as above. Then $J(\mathcal{H})$ extends over the degeneracy point as the Jacobian of the Hodge structure on $\Omega_f$, where $f$ is the local holomorphic map defining the fibration.
\end{theorem}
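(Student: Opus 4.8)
The plan is to reduce the statement about the variation of Jacobians $J(\mathcal{H})$ to the extension results already established for the underlying variation of Hodge structure $\mathcal{H}$. Since the Jacobian construction $J^1(H_s^1)=H^1_{s,\mathbb{Z}}\backslash H^1_{s,\mathbb{C}}/F^0H^1_{s,\mathbb{C}}$ is a functor applied fiberwise to the polarized Hodge structure of weight $-1$, the key observation is that everything needed to form the Jacobian---the integral lattice, the Hodge filtration $F^0$, and the polarization---extends across the puncture by the work in Sections 4 and 5. First I would identify the local degeneration: near the isolated degenerate point, the family of curves is governed by a holomorphic germ $f:\mathbb{C}^{2}\to\mathbb{C}$ with isolated singularity (here $n=1$), and the vanishing cohomology $H^n(X_\infty,\mathbb{C})$ carries the Steenbrink limit MHS. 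By Theorem 5.1 and Corollary (the final Theorem stating the extended fiber is $\Omega_f$), the cohomology bundle extends over $0$ with extended fiber isomorphic to $\Omega_f$, equipped with the MHS transported via $\Phi$ and polarized by the modified Grothendieck residue $\widehat{Res}_{f,0}$.

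Second, I would construct the extended Jacobian directly as the quotient associated to this extended fiber. Concretely, I would set the extended fiber of $J(\mathcal{H})$ over $0$ to be the Jacobian of the mixed Hodge structure on $\Omega_f$, namely
\[
J(\Omega_f)=\Omega_{f,\mathbb{Z}}\backslash \Omega_f / F^0\Omega_f,
\]
where $\Omega_{f,\mathbb{Z}}:=\Phi H^n(X_\infty,\mathbb{Z})$ is the integral structure transported by $\Phi$, and $F^0\Omega_f=\Phi F^0H^n(X_\infty,\mathbb{C})$ is the transported Hodge filtration. The content to verify is that the holomorphic family $s\mapsto J^1(H_s^1)$ over $S^*$ extends holomorphically over $0$ with this prescribed special fiber, which amounts to showing that the Deligne extension of the Hodge bundle, together with the limit of the lattice and the limit of $F^0$, glue to give a holomorphic family of complex tori degenerating to $J(\Omega_f)$. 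This is where the comparison between $V$-lattices and Brieskorn lattices from Section 4 enters: the limit Hodge filtration $F$ on the canonical fiber is exactly the data that produces $F^0\Omega_f$ under the gluing isomorphism $Gr_F^{n-p}(H_\lambda)\cong Gr_{\alpha+p}^V(\mathcal{H}^{(0)}/\tau^{-1}\mathcal{H}^{(0)})$.

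Third, I would identify the polarization on the extended Jacobian: the principal polarization on each $J^1(H_s^1)$ comes from the Poincaré pairing on $H^1(C_s)$, and by Theorem 5.1 this pairing deforms, under $\Phi$, to the modified Grothendieck residue $\widehat{Res}_{f,0}$ on $\Omega_f$. Thus the limiting polarization on $J(\Omega_f)$ is governed by $\widehat{Res}_{f,0}$, and Theorem 5.4 guarantees this is a genuine (graded) polarization, so the special fiber is again a polarized complex torus in the appropriate sense. I expect the main obstacle to be the degeneration of the abelian variety structure itself: over the puncture the fibers are principally polarized abelian varieties, but the limit of a degenerating family of abelian varieties is generally only a semi-abelian variety, not an abelian variety, because the weight filtration on $\Omega_f$ need no longer be trivial (i.e. $\mathcal{H}$ need not remain of pure weight $-1$ in the limit). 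Hence the precise sense of ``Jacobian of the Hodge structure on $\Omega_f$'' must be the generalized intermediate Jacobian attached to the mixed Hodge structure, and the careful point is to show that the transported MHS and polarization make this generalized Jacobian the correct flat limit of the family $J(\mathcal{H})$---this I would address by invoking the compatibility of $\Phi$ with the weight filtration established in Section 5, reducing the claim to the extension statement for $\mathcal{H}$ already proved.
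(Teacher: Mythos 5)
Your proposal is correct and follows the same backbone as the paper's proof: both identify the special fiber of the extended Jacobian bundle as $J^1(\Omega_f)=\Omega_{f,\mathbb{Z}}\backslash\,\Omega_f/F^0\Omega_f$, with lattice, filtration and polarization transported through $\Phi$ and the extension results of Sections 4--5, the polarization becoming the modified Grothendieck residue $\widehat{Res}_{f,0}$. The packaging differs in two respects. The paper presents the Jacobian bundle as an extension of local systems, $0\to\mathcal{H}_s\to\mathcal{J}_s\to\mathbb{Z}_s\to 0$, and correspondingly of Gauss--Manin modules $0\to M\to N\to\mathbb{Q}^H_{S^*}[n]\to 0$, extending the middle term across the puncture by extending the two ends; it then records the compatibility of pairings in a diagram whose top row is the K.~Saito higher residue pairing on the Gauss--Manin system, reducing on fibers to the Grothendieck residue. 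You instead work directly with the quotient presentation of $J(\mathcal{H})$, arguing that the Deligne extension together with the $V$-lattice/Brieskorn-lattice comparison produces a holomorphic family of tori degenerating to $J^1(\Omega_f)$ --- a more hands-on route that avoids the exact-sequence formalism but makes the holomorphicity of the degenerating torus family the explicit burden of proof, which the paper's module-theoretic extension handles implicitly. Finally, your caveat that the limit fiber should a priori be only the generalized Jacobian of the \emph{mixed} Hodge structure (semi-abelian when the weight filtration on $\Omega_f$ is nontrivial) is a point the paper glosses over: the paper asserts the extended fiber is a principally polarized abelian variety with a $\Theta$-divisor, which is only accurate when the limit MHS is pure of weight one (no vanishing of weight-graded pieces under $N$); your more cautious formulation is in fact the defensible one, and in this respect your argument is tighter than the paper's.
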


\begin{proof}
To extend $J(\mathcal{H})$ to a space over $S$, we let $\mathcal{G}$ be the Gauss-Manin system on $S^*$, obtained from the variation $\mathcal{H}$. 
On $S^*$ we have an extension of integral local classes 

\begin{equation}
0 \to \mathcal{H}_s \to \mathcal{J}_s \to \mathbb{Z}_s \to 0. 
\end{equation}

\vspace{0.2cm}

\noindent
On the Gauss-Manin systems we get

\begin{equation} 
0 \to M \to N \to \mathbb{Q}_{S^*}^H[n] \to 0 
\end{equation}

\vspace{0.2cm}

\noindent
with $\mathbb{Q}_S^H[n]$ is the trivial module with sheaf of sections $\mathcal{O}_{S^*}$. The first and the last objects in the short exact sequences (33) and (34) extend to the punctures in a way that the extended fiber is polarized by modified Grothendieck pairing. The extended fiber of the Jacobian bundle is the Jacobian of the opposite Hodge filtration. In this way the extended fiber is an abelian variety and principally polarized, with some $\Theta$-divisor. The extended Jacobian simply is 

\[ X_0=J^1(\Omega_f)=\Omega_{f,\mathbb{Z}} \setminus \Omega_f/F^0\Omega_f. \]

\vspace{0.5cm}

At the level of local systems (Hodge structures) we have 
the diagram of flat pairings,

\vspace{0.1cm}
 
\begin{equation}
\begin{array}[c]{cccccc}
\kappa: & \mathcal{H} & \otimes & \mathcal{H} & \rightarrow & \mathbb{C}\\
& \downarrow &  & \downarrow &  & \\
\kappa_J: & J & \otimes & J & \rightarrow & \mathbb{C}\\
& \downarrow &  & \downarrow &  & \\
\times : & \mathbb{Q} & \otimes & \mathbb{Q} & \rightarrow & \mathbb{C}\\ 
\end{array}.
\end{equation}

\vspace{0.3cm}

\noindent
The extension of the first and the last provides an extension of the middle line. Similar non-degenerate bilinear forms can be defined on the Gauss-Manin modules, where the above diagram is its reduction on fibers;

\vspace{0.1cm}

\begin{equation}
\begin{array}[c]{cccccc}
K: & \mathcal{G} & \otimes & \mathcal{G} & \rightarrow & \mathbb{C}[t,t^{-1}]\\
& \downarrow &  & \downarrow &  & \\
K_J: & N & \otimes & N & \rightarrow & \mathbb{C}[t,t^{-1}]\\
& \downarrow &  & \downarrow &  & \\
\times : & \mathbb{Q}_{S}^H & \otimes & \mathbb{Q}_{S}^H & \rightarrow & \mathbb{C}[t,t^{-1}]
\end{array}
\end{equation}

\vspace{0.1cm}

\noindent
where the map in the first line is the K. Saito higher residue pairing. 

\end{proof}

\vspace{0.5cm}

(2) Hodge theory assigns to any polarized Hodge structure $(H,F,S)$ a signature which is the signature of the hermitian form $S(C \bullet, \bar{\bullet})$, where $C$ is the Weil operator. In case of a polarized mixed Hodge structure $(H,F,W(N),S)$, where $N$ is a nilpotent operator this signature is defined to be the sum of the signatures of the hermitian forms associated to the graded polarizations $S_l: PGr_l^W H \times PGr_l^W H \to \mathbb{C}$, i.e signatures of $h_l:=S_l(C_l \bullet , N^l \bar{\bullet})$ for all $l$. A basic example of this is the signature associated to mixed Hodge structure on the total cohomology of a compact Kahler manifold, namely Hodge index theorem. In this case the MHS is polarized by 

\[ S(u,v)=\displaystyle{(-1)^{m(m-1)/2}\int_{X}} u \wedge v \wedge \omega^{n-m}, \qquad u,v \in H^m. \]

\vspace{0.3cm}

\noindent
where $\omega$ is the Kahler class. The signature associated to the polarization $S$ is calculated by W. Hodge in this case.

\vspace{0.5cm}

\begin{theorem} (W. Hodge)
The signature associated to the polarized mixed Hodge structure of an even dimensional compact Kahler manifold is $\sum_{p,q}(-1)^qh^{p,q}$, where the sum runs over all the Hodge numbers, $h^{p,q}$. This signature is $0$ when the dimension is odd.
\end{theorem}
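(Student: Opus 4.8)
The plan is to realize the total cohomology $H^*(X)=\bigoplus_{m=0}^{2n}H^m(X,\mathbb{C})$, where $n=\dim_{\mathbb{C}}X$, as a polarized mixed Hodge structure whose nilpotent operator $N$ is the Lefschetz operator $L=\omega\wedge(-)$, and then to read off the formula by computing the signature of $S$ on the primitive graded pieces. First I would invoke the Hard Lefschetz theorem: it identifies the monodromy weight filtration $W(N)$, suitably centered at $n$, with the Lefschetz filtration, so that $Gr_l^W H^*$ is identified with $H^{n-l}$ and the primitive part $PGr_l^W$ with the primitive cohomology $P^{n-l}=\ker(L^{l+1}:H^{n-l}\to H^{n+l+2})$. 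Under this identification the Lefschetz decomposition $H^m=\bigoplus_{k\geq 0}L^k P^{m-2k}$ becomes precisely the decomposition $Gr_l^W=\bigoplus_r N^r P_{l-2r}$ entering the definition of the signature of a polarized mixed Hodge structure recalled above.

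Second, the essential analytic input is the Hodge--Riemann bilinear relations: for a primitive class $0\ne\alpha\in P^{p,q}$ of degree $r=p+q$ one has $(-1)^{r(r-1)/2}\, i^{p-q}\int_X\alpha\wedge\bar\alpha\wedge\omega^{n-r}>0$. This is exactly the assertion that the form $S(u,v)=(-1)^{m(m-1)/2}\int_X u\wedge v\wedge\omega^{n-m}$ polarizes the Hodge structure on each primitive piece, i.e.\ that the Hermitian forms $h_l=S_l(C_l\bullet,N^l\bar\bullet)$ are positive definite. By the definition of the signature, the quantity to be computed is $\sum_l\mathrm{sig}(S_l)$, and the Hodge--Riemann relations reduce each $\mathrm{sig}(S_l)$ to counting the Hodge pieces of $PGr_l^W$ weighted by the sign that the Weil operator $C_l=i^{p-q}$ imposes on the \emph{indefinite} form $S_l$.

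Third---and this is where the main work lies---I would carry out the sign bookkeeping. On the real subspace $P^{p,q}\oplus P^{q,p}$ (for $p\ne q$) type considerations force the diagonal terms $S(\alpha,\alpha)$ to vanish, so the signature contribution is governed solely by $\mathrm{Re}\,S(\alpha,\bar\alpha)$, whose sign is dictated by $i^{q-p}$ together with the normalization $(-1)^{r(r-1)/2}$; on the $p=q$ pieces one argues directly. Reassembling these contributions along the Lefschetz decomposition $H^m=\bigoplus_k L^k P^{m-2k}$, so that each full Hodge number $h^{p,q}$ is built up from the primitive Hodge numbers it contains, the accumulated sign collapses to exactly $(-1)^q$, yielding $\sigma=\sum_{p,q}(-1)^q h^{p,q}$ summed over the Hodge numbers of the whole cohomology. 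The hard part is precisely this normalization of signs: one must simultaneously track the Weil eigenvalue $i^{p-q}$, the polarization twist $(-1)^{r(r-1)/2}$, and the shift introduced by the powers of $N=L$, and verify they conspire to produce $(-1)^q$ rather than $(-1)^p$ or a mixed expression.

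Finally, the vanishing for odd complex dimension follows formally and needs no further sign analysis: Poincar\'e duality gives the symmetry $h^{p,q}=h^{n-p,n-q}$, and the substitution $(p,q)\mapsto(n-p,n-q)$ in $\sum_{p,q}(-1)^q h^{p,q}$ produces a global factor $(-1)^n$, whence $\sigma=(-1)^n\sigma$. For $n$ odd this forces $\sigma=0$, in agreement with the topological fact that the signature of a manifold whose real dimension is not divisible by $4$ vanishes.
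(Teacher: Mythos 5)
The paper offers no proof of this statement: it is quoted as W.~Hodge's classical computation (the Hodge index theorem), with the polarization $S(u,v)=(-1)^{m(m-1)/2}\int_X u\wedge v\wedge\omega^{n-m}$ recalled just before the statement, so there is no internal argument to compare yours against. Your proposal reconstructs the standard classical proof, and its architecture is correct: take $N=L$, use Hard Lefschetz to identify $Gr_l^W$ of the total cohomology with $H^{n-l}$ and $PGr_l^W$ with the primitive cohomology $P^{n-l}$, feed in the Hodge--Riemann bilinear relations as the positivity input, and deduce the odd-dimensional vanishing from $h^{p,q}=h^{n-p,n-q}$; that last step is fine as written, since $(-1)^{n-q}=(-1)^n(-1)^q$ gives $\sigma=(-1)^n\sigma$. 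You also read the paper's definition the right way: taken literally, $h_l:=S_l(C_l\bullet,N^l\bar\bullet)$ is positive definite by the polarization axioms and would have signature $\dim PGr_l^W$; the intended quantity, which you correctly work with, is the signature of the indefinite form $S_l(\bullet,N^l\bar\bullet)$, computed by tracking the sign of the Weil eigenvalue $i^{p-q}$ on each type.

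The one place the write-up falls short of a proof is precisely the step you yourself flag as ``the hard part'': the sign bookkeeping is announced but never executed, and two specific computations must be done rather than promised. First, the paper's signature counts each primitive piece $PGr_l^W$ exactly once, while the claimed formula $\sum_{p,q}(-1)^qh^{p,q}$ runs over \emph{all} Hodge numbers; bridging the two requires the telescoping identity $h^{p,q}=\sum_{k\geq 0}h^{p-k,q-k}_{\mathrm{prim}}$ together with the parity cancellation $\sum_{k=0}^{n-r}(-1)^k$, which equals $1$ if $n-r$ is even and $0$ if $n-r$ is odd, yielding $\sum_{p,q}(-1)^qh^{p,q}=\sum_{r\equiv n\ (2)}\sum_{p+q=r}(-1)^qh^{p,q}_{\mathrm{prim}}$. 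Your phrase ``each full Hodge number is built up from the primitive Hodge numbers it contains'' gestures at this but does not establish it. Second, and correspondingly, you must check that primitive pieces in degrees $r\not\equiv n\pmod 2$ contribute signature zero; this follows from the type computation $i^{q-p}=\overline{i^{p-q}}=-i^{p-q}$ when $p+q$ is odd, and it is a different cancellation from the $p\neq q$ diagonal vanishing you invoke. With these two computations written out, the argument closes and agrees with the classical proof the paper implicitly cites; as it stands, it is a correct and well-organized plan whose decisive verification is deferred.
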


\vspace{0.5cm}

\noindent
Similar definitions can be applied to polarized variation of mixed Hodge structure, according to the invariance of Hodge numbers in a variation of MHS. In the special case  of isolated hypersurface sigularities, the polarization form is given by $S_{ \ne 1} \oplus S_1$ where

\vspace{0.3cm}

\begin{center}
$ S_{\ne 1}(a,b) = S_Y(i^*a,i^*b), \qquad a,b \in H^n(X_{\infty})_{\ne 1} $ \\[0.2cm]
$S_{1}(a,b) = S_Y(i^*a,i^*N_Yb), \qquad a,b \in H^n(X_{\infty})_{1} $.
\end{center}

\vspace{0.3cm}

\noindent
A repeated application of theorem 7.2 gives the following,   

\vspace{0.2cm}

\begin{theorem} \cite{JS2}
The signature associated to the polarized variation of mixed Hodge structure of an isolated hypersurface singularity with even dimensional fibers is given by 

\begin{equation} 
\sigma=\displaystyle{\sum_{p+q=n+2}(-1)^q h_1^{pq}+2\sum_{p+q \geq n+3}(-1)^q h_1^{pq}+\sum (-1)^q h_{\ne 1}^{pq}} 
\end{equation}

\vspace{0.3cm}

\noindent
where $h_1=\dim H^n(X_{\infty})_1,h_{\ne 1}=\dim H^n(X_{\infty})_{\ne 1}$ are the corresponding Hodge numbers. This signature is $0$ when the fibers have odd dimensions.
\end{theorem}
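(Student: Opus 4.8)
The plan is to reduce the statement to the pure case of Theorem 7.2 and to apply it one primitive graded piece at a time. First I would split $H^n(X_\infty)=H^n(X_\infty)_{\neq1}\oplus H^n(X_\infty)_1$ by the eigenvalues of the monodromy; the two summands are orthogonal for the cup product, so by the description $S=S_{\neq1}\oplus S_1$ recalled before the statement the total signature is the sum of the signatures of the two blocks. Using the definition of the signature of a polarized mixed Hodge structure as the sum $\sum_l\sigma(h_l)$ of the signatures of the induced polarizations on the primitive subspaces $PGr_l^W$, it then suffices, for each block, to determine which primitive levels contribute and to reassemble their contributions into the stated Hodge-number sums. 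Throughout I write $A_w:=\sum_{p+q=w}(-1)^q h^{pq}$ for the alternating sum of the Hodge numbers of weight $w$ (with the subscript $1$ or $\neq1$), and $A''_w$ for the corresponding alternating sum taken over the primitive part of weight $w$; these are linked by $A''_w=A_w+A_{w-2}$ upon inverting the Lefschetz decomposition.

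For the block $H^n(X_\infty)_{\neq1}$ the weight filtration of $N$ is centred at $n$ and the polarization is $S_Y$ itself, so the situation is formally that of Theorem 7.2. The Lefschetz / $\mathfrak{sl}_2$-structure shows that, because $S_Y$ self-pairs $N^{l/2}PGr^W$ only for even $l$, exactly the even levels contribute, each by the value given by Theorem 7.2 applied to that primitive pure Hodge structure. Summing these values and using $A''_w=A_w+A_{w-2}$ collapses the double sum to the single clean term $\sum(-1)^q h_{\neq1}^{pq}$, which is the last summand of the asserted formula.

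The real content is the unipotent block $H^n(X_\infty)_1$, where the weight filtration of $N$ is centred at $n+1$ and the polarization $S_1=S_Y(i^*\cdot,i^*N_Y\cdot)$ carries one extra factor $N_Y$. That extra $N_Y$ shifts the self-paired position of each Lefschetz string by one step and places the top of each string in the radical of $S_1$, so that now the odd levels are the ones that self-pair and contribute; Theorem 7.2 applied to each such primitive piece yields the primitive alternating sums $A''_{(n+1)+l}$ with $l$ odd. Converting to total Hodge numbers through $A''_w=A_w+A_{w-2}$ is precisely what produces the factor $2$: consecutive odd levels overlap in their total-weight content and double every interior weight, leaving the boundary weight $n+2$ counted once. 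Finally, using the reflection symmetry $A_{2(n+1)-w}=(-1)^{w-(n+1)}A_w$ about the centre together with the vanishing $A_w=0$ for all odd $w$ (which holds because $p+q=w$ odd forces $p\neq q$, whereupon Hodge symmetry $h_1^{pq}=h_1^{qp}$ cancels the alternating sum when $n$ is even), the assembled expression becomes exactly $\sum_{p+q=n+2}(-1)^q h_1^{pq}+2\sum_{p+q\ge n+3}(-1)^q h_1^{pq}$.

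The hard part will be the second block: rigorously establishing, on the Deligne bigrading rather than on real vectors, that the $N_Y$-twist moves the contributing levels from even to odd, and fixing the Hodge-Riemann sign on each contributing primitive piece so that no spurious central term survives. The remainder is formal: the identity $A''_w=A_w+A_{w-2}$ supplies the factor $2$, while the two vanishing facts ($A_w=0$ for odd $w$ and the reflection symmetry) trim the sum to the stated range. For the last assertion, when $n$ is odd the cup product $S$ on $H^n$ is skew-symmetric, hence every graded polarization on every primitive piece is alternating and contributes signature $0$; therefore the total signature vanishes. The whole argument is the repeated application of Theorem 7.2 announced just before the statement.
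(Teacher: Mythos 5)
Your plan is exactly the route the paper indicates: the paper offers no argument of its own beyond citing Steenbrink \cite{JS2} and the one-line remark that the formula follows by ``repeated application of theorem 7.2,'' and your sketch --- the eigenvalue splitting $S=S_{\neq 1}\oplus S_1$, the observation that the signature concentrates on the self-paired Lefschetz positions (even primitive levels for the block centred at weight $n$, odd levels for the $N_Y$-twisted block centred at $n+1$, since the extra $N_Y$ shifts the graded duality by one step and puts the ends of each string in the radical), and the telescoping of primitive into total Hodge numbers --- is a correct fleshing-out of precisely that strategy, matching Steenbrink's original computation. One small slip to fix: the conversion identity should read $A''_w=A_w+A_{w+2}$ (the primitive alternating sum at weight $w$ picks up the total sum at the weight \emph{above}, since $h^{pq}_{\mathrm{prim},w}=h^{pq}_w-h^{(p+1)(q+1)}_{w+2}$), not $A_w+A_{w-2}$; with that correction, and noting that $A_w=0$ for odd $w$ needs only the Hodge symmetry of the rational sub-structure $H^n(X_\infty)_1$ rather than the parity of $n$, your bookkeeping reproduces $A_{n+2}+2\sum_{w\ge n+3}A_w$ for the unipotent block and $\sum(-1)^q h^{pq}_{\neq 1}$ for the other, which is the stated formula.
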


\vspace{0.2cm}

Let $f:(\mathbb{C}^{n+1},0) \to (\mathbb{C},0)$ be a germ of analytic function having an isolated singularity at the origin. Consider

\vspace{0.3cm}
 
\begin{center}
$A_f=\displaystyle{\frac{\mathbb{C}[[x_0,...,x_n]]}{(\partial_0 f,..., \partial_n f)}}$.
\end{center}

\vspace{0.3cm}

\noindent
By Jacobson-Morosov theorem, there exists a unique increasing filtration $W_l$ on $A_f$ (or $A$) such that 

\[ \times \bar{f}:Gr_l^W A \to Gr_{l-2}^W A, \qquad \times \bar{f}^l:Gr_{l}^W A \cong Gr_{-l}^W A. \]

\vspace{0.3cm}

\noindent
Define the primitive components $P_l=PGr_l^W A_f:=\ker \bar{f}^{l+1}:Gr_{l}^W A \to Gr_{-l}^W A $. Then, we obtain a set of non-degenerate forms

\[ Q_m:PGr_m^W A \times PGr_m^W A \to \mathbb{C}. \]

\vspace{0.3cm}

\noindent
The mixed Hodge structure defined on $\Omega_f$. We defined a MHS on $\Omega_f$ in sec. 4 and saw in sections 5
and 6 that it is polarized by the form $\widehat{Res_f}$ in a way that the map $\Phi$ is an isomorphism of polarizations. 

\vspace{0.2cm}

\begin{theorem}  

The signature associated to the modified Grothendieck pairing $\widehat{Res}_{f,0}$ associated to an isolated hypersurface singularity germ $f$; is equal to the signature of the polarization form associated to the MHS of the vanishing cohomology, given by (37). 

\end{theorem}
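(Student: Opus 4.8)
The plan is to reduce the statement entirely to the isomorphism $\Phi$ of Theorem 5.1 together with the definition of the signature of a polarized mixed Hodge structure recalled just before $(37)$. First I would record that, for a polarized MHS $(H,F,W(N),S)$, the signature is by definition $\sum_l \mathrm{sign}(h_l)$, where $h_l = S_l(C_l\,\bullet, N^l\,\overline{\bullet})$ is the Hermitian form on the primitive space $PGr_l^W H$ and $C_l$ is the Weil operator of the pure Hodge structure on that primitive piece. Thus the signature is a function of nothing more than the collection of graded polarizations on the primitive subspaces, so it suffices to compare these subspaces and forms on the two sides under $\Phi$.

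Second, I would invoke the fact established in Sections 4 and 5 that $\Phi:H^n(X_{\infty},\mathbb{C}) \to \Omega_f$ is an isomorphism of mixed Hodge structures: by its very construction it carries $W_\bullet$, $F^\bullet$ and the conjugation of $(9)$ to their counterparts on $\Omega_f$, and by Varchenko's theorem (Section 1) it carries $N=\log M_u$ to the operator $\mathfrak{f}$ induced by multiplication by $f$, up to the factor $-2\pi i$. In particular $\Phi$ preserves all the Hodge numbers $h^{p,q}_\lambda$ and identifies the primitive decompositions, sending $PGr_l^W H^n(X_{\infty})$ isomorphically onto $P_l' = PGr_l^W\Omega_f$ together with the Weil operators $C_l$.

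Third comes the heart of the argument, which is Theorem 5.1: under $\Phi$ the polarization $S$ and the modified Grothendieck pairing $\widehat{Res}_{f,0}$ agree up to a single nonzero scalar $*$, with the twist $\tilde{C}$ (equal to $(-1)^p$ on $J^{p,q}$) supplying exactly the sign data of the Weil operator. Passing to graded pieces by means of the Riemann–Hodge bilinear relations of Section 5, the primitive Hermitian forms $\widehat{Res}_l(C_l\,\bullet,\tilde C\,\mathfrak{f}^l\,\overline{\bullet})$ on $P_l'$ correspond under $\Phi$ to the forms $h_l$ on $PGr_l^W H^n(X_{\infty})$, again up to scalar factors coming from $*$ and from the $-2\pi i$ in Varchenko's identification. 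Since $\Phi$ is an isomorphism of polarizations, each of these graded forms is definite of the prescribed sign \emph{simultaneously} for one and the same $*$, as recorded in the example following the Riemann–Hodge relations; hence $\mathrm{sign}(h_l)$ is preserved for every $l$, and summing over $l$ gives $\mathrm{sign}(\widehat{Res}_{f,0}) = \mathrm{sign}(S)$, which by the signature theorem equals the quantity $(37)$.

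The step I expect to be the main obstacle is controlling the constant $*$ together with the auxiliary factors $(-2\pi i)^l$. A priori $*$ is an arbitrary nonzero complex number, and a different phase or sign on different primitive blocks would permute the $\pm$ contributions and corrupt the total signature. The resolution is precisely the simultaneity built into Theorem 5.1 and the Riemann–Hodge relations: the several scalar factors combine on each primitive piece into a single \emph{positive real} constant (the ``$\mathrm{Const}$'' in the positivity statement $\mathrm{Const}\times res_{f,0}(C_l x,\tilde C\,\mathfrak{f}^l\,\bar x)>0$), so that $\Phi$ transports the positivity of the original polarization to that of $\widehat{Res}_{f,0}$ without any loss of sign, and no reshuffling of signatures occurs.
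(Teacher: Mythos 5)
Your proposal is correct and takes essentially the same approach as the paper: the paper's entire proof is the one-line citation ``Trivial by Theorems 5.1, 6.1 and 7.2,'' i.e.\ precisely the combination you spell out of the polarization correspondence under $\Phi$ (Theorem 5.1), the graded Riemann--Hodge relations for $\widehat{Res}$, and the signature formula (37). Your added care about the single simultaneous constant $*$ and the $-2\pi i$ factors is a faithful expansion of what the paper leaves implicit, not a different argument.
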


Proof: Trivial by Theorems 5.1, 6.1 and 7.2.\\[0.2cm]

\vspace{0.5cm}

\nocite{*}
\bibliographystyle{plain}

\end{document}